\definecolor{mygray}{gray}{.7}
\numberwithin{equation}{section}
\theoremstyle{plain}
\newtheorem{theorem}{Theorem}%[section]
\newtheorem{lemma}{Lemma}[section]
\newtheorem{corollary}[lemma]{Corollary}
\theoremstyle{definition}
\newtheorem{definition}[lemma]{Definition}
\newtheorem{remark}[lemma]{Remark}
\newtheorem{example}[lemma]{Example}
\title{Sweedler Duality for BiHom-associative Algebras}
\date{\today}
\subjclass[2020]{Primary 17A30; Secondary 17A60, 17D30}
\keywords{Sweedler finite dual, Sweedler duality, BiHom-algebra, BiHom-coalgebra, BiHom-module, BiHom-comodule}
\author{Jiacheng Sun}
\address{School of Mathematics, Southeast University, Nanjing 211189, China}
\email{220242018@seu.edu.cn}
\begin{document}

\begin{abstract}
Motivated by the fact that ordinary linear duality does not in general produce a coalgebra structure from an infinite-dimensional algebra, we develop a Sweedler-type finite dual construction for BiHom-associative algebras. 
For a BiHom-algebra $(G,\mu,\alpha,\beta)$ over a field, we define its Sweedler dual $G^{\circ}\subseteq G^{*}$ as the subspace of linear functionals annihilating a finite-codimensional BiHom-ideal of $G$. 
We prove that $G^{\circ}$ carries a natural BiHom-coalgebra structure whose comultiplication is the restriction of $\mu^{*}$, and that BiHom-algebra morphisms induce BiHom-coalgebra morphisms on Sweedler duals. 
We further extend this construction to right BiHom-modules, obtaining right BiHom-comodules over $G^{\circ}$ under a surjectivity assumption on the twisting map $\beta$. 
The Hom and classical cases are recovered by the specializations $\alpha=\beta$ and $\alpha=\beta=\mathrm{Id}$, respectively.
\end{abstract}

        \maketitle

        \section{Introduction}
	Algebraic deformation theory, initiated by Gerstenhaber~\cite{G}, has become a central theme in modern algebra. A different deformation paradigm is provided by \emph{Hom-type} structures, where classical identities are twisted by linear self-maps. Hom--Lie algebras were introduced by Hartwig, Larsson and Silvestrov~\cite{HLS} in connection with $q$-deformations of the Witt and Virasoro algebras, and related quasi--Lie and quasi--Hom--Lie structures were developed in~\cite{LS}. Makhlouf and Silvestrov subsequently studied several Hom-algebraic systems, including Hom-associative and Hom-Lie admissible algebras~\cite{MS1}, and established the corresponding (co)algebraic framework~\cite{MS2,MS3}. Further representation-theoretic and categorical aspects, such as Hom-(co)modules and Hom-Hopf-type constructions, were investigated by Yau~\cite{Y1,Y2,Y3,Y4}. The BiHom setting, involving two commuting twisting maps, has also attracted considerable attention and connects to matrix theory~\cite{SWZ}, integrable systems~\cite{GLSWZ}, braided representations~\cite{LWWZ}, and quantum-algebraic structures~\cite{Sheng,SD}.

Exponentials of infinitesimal characters and their convolution products give rise to characters with rich combinatorial interpretations; see~\cite{DBHPS}.
For a Hopf algebra, the characters form a group, and under some conditions this group carries a natural structure of an infinite-dimensional Lie group~\cite{Du}.
These considerations point to the \emph{Sweedler duality}, i.e.\ the largest dual space on which the induced comultiplication is well defined.
The same construction also appears in connections with automata with multiplicities and algebraic combinatorics~\cite{DFLL,V}.

In the finite-dimensional case, the usual linear dual of a Hom-algebra canonically yields a Hom-coalgebra, and the same holds for modules and comodules. 
In infinite dimension this naive dualization typically fails, since the transpose of the multiplication (or action) need not land in the appropriate tensor product.
The problem of finding a suitable dual construction in the Hom setting was raised by Wang.
In~\cite{SWZZ}, together with Wang, Zhang and Zhu, we addressed this question by adapting Sweedler's finite dual: for a Hom-algebra $G$ one considers a distinguished subspace $G^{\circ}\subseteq G^{*}$ defined via finite-codimensional ideals, which carries a natural Hom-coalgebra structure, and similarly for Hom-modules.
The purpose of the present paper is to extend these constructions to the BiHom framework, where two commuting twisting maps are involved.
We show that the Sweedler duality $G^{\circ}$ for a BiHom-algebra inherits a BiHom-coalgebra structure, and we develop the analogous module--comodule correspondence for right BiHom-modules.
The Hom case is recovered when $\alpha=\beta$, and the classical finite dual when $\alpha=\beta=\mathrm{Id}$.

The paper is organized as follows.
In Section~\ref{sect2} we recall basic notions on BiHom-(co)algebras and establish the BiHom-coalgebra structure on the Sweedler duality $G^{\circ}$ (Theorem~\ref{f}), together with functoriality on morphisms (Corollary~\ref{f1}).
Section~\ref{sect3} treats the module side: we construct the induced right $G^{\circ}$-BiHom-comodule structure on $M^{\circ}$ (Theorem~\ref{f2}) and show that BiHom-module morphisms dualize to BiHom-comodule morphisms (Theorem~\ref{f3}).

\paragraph{Notation.}
Throughout, $\mathbb K$ is a field of characteristic $0$.
For a vector space $V$ we write $\langle f,v\rangle := f(v)$ for $f\in V^{*}$ and $v\in V$.
For a BiHom-algebra $(G,\mu,\alpha,\beta)$ we consider the dual maps $
\mu^{*}:G^{*}\longrightarrow (G\otimes G)^{*}$, $\langle \mu^{*}(f),x\otimes y\rangle = \langle f,\mu(x\otimes y)\rangle$, and $
\alpha^{*}(f)=f\circ \alpha$, $\beta^{*}(f)=f\circ \beta$.
For a right BiHom-module $(M,\rho,\kappa,\tau)$ over $G$ we similarly set $\rho^{*}:M^{*}\longrightarrow (M\otimes G)^{*}$, $\langle \rho^{*}(\xi),m\otimes g\rangle = \langle \xi,\rho(m\otimes g)\rangle$,
together with $\kappa^{*}(\xi)=\xi\circ\kappa$ and $\tau^{*}(\xi)=\xi\circ\tau$.
Whenever one tensor factor is finite-dimensional, we use the canonical identifications
$(G\otimes G)^{*}\cong G^{*}\otimes G^{*}$ and $(M\otimes G)^{*}\cong M^{*}\otimes G^{*}$.
 
\section{Sweedler duality for BiHom-algebras}\label{sect2}

        In this section, we will first give an overview of the basic concepts related to the structures of BiHom-(co)algebras and the interrelationship between different BiHom-(co)algebras (see Definition 3.3 and Definition 5.1 in \cite{GMMP}). We then move on to introduce the definition of the {\em Sweedler duality}, which differs from the conventional duality (see Chapter 6 in \cite{G},\cite{SWZZ}).

        \begin{definition}
        A {\em BiHom-associative algebra} is a quadruple $(G, \mu, \alpha,\beta)$, where $G$ is a linear space, $\mu: G \otimes G \rightarrow G$ is a bilinear multiplication with $\mu(g\otimes g')=gg'$. For simplicity, when there is no confusion, we will omit the notation $\mu$. $\alpha,\beta: G \rightarrow G$ are two commutative homomorphisms, i.e., $\alpha\circ\beta=\beta\circ\alpha$. It is required that
       \begin{align}
       \alpha(g)(hk) &= (gh)\beta(k), \quad \text{ (BiHom-associativity)} \label{homAssoc}\\
       \alpha(hk)&=\alpha(h)\alpha(k). \quad \text{ ($\alpha$-multiplicativity)} \label{multiplicativity_1}\\
       \beta(hk)&=\beta(h)\beta(k). \quad \text{ ($\beta$-multiplicativity)} \label{multiplicativity_11}
       \end{align}
       \end{definition}
       
        They can be represented by the following commutative diagrams:
        \begin{equation*}
        \xymatrix{G\otimes G\otimes G\ar@{->}[d]_{\alpha\otimes \mu} & &G\otimes G \ar@{<-}_{\mu\otimes\beta}[ll]\ar@{->}_{\mu}[d] \\
        G\otimes G & & G, \ar@{<-}_{\mu }[ll] }
        \qquad 
        \xymatrix{G\otimes G\ar@{->}[d]_{\mu} & &G\otimes G \ar@{<-}_{\alpha\otimes\alpha}[ll]\ar@{->}_{\mu}[d] \\
        G & & G. \ar@{<-}_{\alpha }[ll] }
        \qquad
        \xymatrix{G\otimes G\ar@{->}[d]_{\mu} & &G\otimes G \ar@{<-}_{\beta\otimes\beta}[ll]\ar@{->}_{\mu}[d] \\
        G & & G. \ar@{<-}_{\beta }[ll] }
        \end{equation*}
        
        In the following of this paper, we will abbreviate {\em BiHom-associative algebra} to {\em BiHom-algebra}, and sometimes we refer to a BiHom-algebra $(G, \mu, \alpha,\beta)$ as $G$. Now, for two BiHom-algebras $(G, \mu, \alpha,\beta)$ and $(G', \mu', \alpha',\beta')$, let $f: G \to G'$ be a linear map, then $f$ is said to be a morphism of BiHom-algebras if it satisfies the following conditions
        \begin{align}\label{s1}
        \mu' \circ (f \otimes f) = f \circ \mu, \quad\quad f \circ \alpha = \alpha' \circ f, \quad\quad f \circ \beta = \beta' \circ f.
        \end{align}
        %In other words, if it commutes with the multiplication operations and respects the Hom-structures of $G$ and $G'$, $f$ is a morphism of Hom-algebras.

       \begin{definition}
        Let $(G,\mu,\alpha,\beta)$ be a BiHom-algebra, then a subspace $H$ of $G$ is a {\em BiHom-associative subalgebra} of $G$ if 
        $bc\in H$, $\alpha(b)\in H$ and $\beta(b)\in H$ for any $b,c\in H$.
        In particular, if both $ab\in H$ and $ba\in H$ hold for any $a\in G$ and $b\in H$, we say that $H$ is an {\em ideal} of $G$.
       \end{definition}

       \begin{definition}
       A \textit{BiHom-coalgebra} is a quadruple $(C, \Delta, \psi,\phi)$, where $C$ is a linear space, $\Delta: C \rightarrow C \otimes C$ is a linear map with notation $\Delta(c):=\sum_{(c)}c_{(1)}\otimes c_{(2)}$ for all $c\in C$. And $\psi,\phi: C \rightarrow C$ are two commutative homomorphisms, i.e., $\psi\circ\phi=\phi\circ\psi$ which satisfies
      \begin{align}
      (\phi \otimes \Delta) \circ \Delta &= (\Delta \otimes \psi) \circ \Delta, \quad \text{ (BiHom-coassociativity)} \label{hom-coassoc} \\
      \Delta\circ\psi&=(\psi\otimes\psi)\circ\Delta,  \quad \text{ ($\psi$-comultiplicativity)}\label{multiplicativity_2}\\
      \Delta\circ\phi&=(\phi\otimes\phi)\circ\Delta.  \quad \text{ ($\phi$-comultiplicativity)}\label{multiplicativity_22}
       \end{align}
        \end{definition}

        They can be expressed by the following commutative diagrams:
\begin{equation*}
\xymatrix{C\otimes C\otimes C\ar@{<-}[d]_{\Delta\otimes \psi} & &C\otimes C \ar@{->}_{\phi\otimes \Delta}[ll]\ar@{<-}_{\Delta}[d] \\
C\otimes C & & C, \ar@{->}_{\Delta }[ll] }
\qquad
\xymatrix{C\otimes C\ar@{<-}[d]_{\Delta} & &C\otimes C \ar@{->}_{\psi\otimes \psi}[ll]\ar@{<-}_{\Delta}[d] \\
C & & C. \ar@{->}_{\psi }[ll] }
\qquad
\xymatrix{C\otimes C\ar@{<-}[d]_{\Delta} & &C\otimes C \ar@{->}_{\phi\otimes \phi}[ll]\ar@{<-}_{\Delta}[d] \\
C & & C. \ar@{->}_{\phi }[ll] }
\end{equation*}\par

        Similarly, we will denote a BiHom-coalgebra $(C, \Delta, \psi,\phi)$ by $C$. A morphism of BiHom-coalgebras is a linear map of the underlying $\mathbb K$-linear spaces that commutes with the twisting maps $\psi$ and $\phi$ and the comultiplication $\Delta$.

        We begin with some basic facts on finite-dimensional duality for BiHom-(co)algebras.

        \begin{lemma}\label{s}
        Let $(G,\mu,\alpha,\beta)$ be a finite-dimensional BiHom-algebra.
        Then $(G^{*},\mu^{*},\beta^{*},\alpha^{*})$ is a BiHom-coalgebra, where
        \[
        \mu^{*}:G^{*}\longrightarrow G^{*}\otimes G^{*},\qquad
        \langle \mu^{*}(f),x\otimes y\rangle=\langle f,\mu(x\otimes y)\rangle,
        \]
        and the twisting maps are given by
        \[
        \alpha^{*}(f)=f\circ\alpha,\qquad \beta^{*}(f)=f\circ\beta\qquad (f\in G^{*}).
        \]
        \end{lemma}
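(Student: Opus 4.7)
The plan is to obtain each BiHom-coalgebra axiom for $(G^{*},\mu^{*},\beta^{*},\alpha^{*})$ as the linear dual of the corresponding BiHom-algebra axiom for $(G,\mu,\alpha,\beta)$. The finite-dimensional hypothesis enters only through the canonical identification $(G\otimes G)^{*}\cong G^{*}\otimes G^{*}$ (and its threefold analogue), which ensures that $\mu^{*}$ really takes values in $G^{*}\otimes G^{*}$ and that both sides of the coassociativity diagram land in $G^{*}\otimes G^{*}\otimes G^{*}$.

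First I would record that $\alpha^{*}$ and $\beta^{*}$ commute, which is immediate since $\alpha^{*}\beta^{*}(f)=f\circ\beta\circ\alpha=f\circ\alpha\circ\beta=\beta^{*}\alpha^{*}(f)$. Next I would verify the $\psi$- and $\phi$-comultiplicativity identities \eqref{multiplicativity_2}--\eqref{multiplicativity_22} by pairing with $x\otimes y\in G\otimes G$. For instance, using $\beta$-multiplicativity \eqref{multiplicativity_11},
\[
\langle\mu^{*}(\beta^{*}(f)),\,x\otimes y\rangle=\langle f,\beta(xy)\rangle=\langle f,\beta(x)\beta(y)\rangle=\langle(\beta^{*}\otimes\beta^{*})\mu^{*}(f),\,x\otimes y\rangle,
\]
so $\mu^{*}\circ\beta^{*}=(\beta^{*}\otimes\beta^{*})\circ\mu^{*}$; the $\alpha^{*}$ version is identical, using \eqref{multiplicativity_1} in place of \eqref{multiplicativity_11}.

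The main identity is BiHom-coassociativity \eqref{hom-coassoc}, which under the assignments $\Delta=\mu^{*}$, $\psi=\beta^{*}$, $\phi=\alpha^{*}$ becomes
\[
(\alpha^{*}\otimes\mu^{*})\circ\mu^{*}=(\mu^{*}\otimes\beta^{*})\circ\mu^{*}.
\]
Evaluating both sides on $x\otimes y\otimes z$, the left pairs $f$ with $\alpha(x)(yz)$ and the right pairs $f$ with $(xy)\beta(z)$; these agree by \eqref{homAssoc}. The only point requiring care --- and the reason for the swap from $(\alpha,\beta)$ to $(\beta^{*},\alpha^{*})$ in the statement --- is bookkeeping about tensor slots: in $\mu\circ(\alpha\otimes\mu)=\mu\circ(\mu\otimes\beta)$ the map $\alpha$ decorates the first tensor factor and $\beta$ the last, while in the coalgebra convention $(\phi\otimes\Delta)\circ\Delta=(\Delta\otimes\psi)\circ\Delta$ the symbol $\phi$ appears on the first factor and $\psi$ on the last. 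Matching slots forces $\phi=\alpha^{*}$ and $\psi=\beta^{*}$, explaining the order $(\beta^{*},\alpha^{*})$. Beyond tracking this swap, no genuine obstacle arises: the argument is a direct transpose of the algebra axioms via finite-dimensional duality.
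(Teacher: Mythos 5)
Your proposal is correct and follows essentially the same route as the paper, which simply says the axioms follow by dualizing the BiHom-algebra identities against the natural pairing under the finite-dimensional identification $(G\otimes G)^{*}\cong G^{*}\otimes G^{*}$; you merely write out the pairing computations (commutativity of $\alpha^{*},\beta^{*}$, the two comultiplicativity identities, and BiHom-coassociativity) that the paper leaves implicit. Your remark on why the twisting maps appear in the order $(\beta^{*},\alpha^{*})$ is accurate and consistent with the paper's conventions.
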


        \begin{proof}
        Since $\dim G<\infty$, we identify $(G\otimes G)^{*}\cong G^{*}\otimes G^{*}$, so $\mu^{*}$ indeed takes values in $G^{*}\otimes G^{*}$.
        The BiHom-coalgebra axioms for $(G^{*},\mu^{*},\beta^{*},\alpha^{*})$ follow by dualizing the BiHom-algebra identities for $(G,\mu,\alpha,\beta)$ and checking them against the natural pairing. The argument parallels the Hom case~\cite{SWZZ}. 
        \end{proof}

\begin{lemma}\label{qq}
Let $(G,\mu,\alpha,\beta)$ and $(G',\mu',\alpha',\beta')$ be finite-dimensional BiHom-algebras.
A linear map $f:G\to G'$ is a BiHom-algebra morphism if and only if
$f^{*}:G'^{*}\to G^{*}$ is a BiHom-coalgebra morphism.
\end{lemma}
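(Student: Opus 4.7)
The plan is to verify the equivalence condition by condition, by dualizing each of the three defining identities of a BiHom-algebra morphism and matching it against the corresponding identity required of $f^{*}$ as a BiHom-coalgebra morphism. By Lemma~\ref{s}, $G^{*}$ and $G'^{*}$ carry BiHom-coalgebra structures with comultiplications $\mu^{*},\mu'^{*}$ and twisting maps $(\beta^{*},\alpha^{*}),(\beta'^{*},\alpha'^{*})$, so that $f^{*}$ being a BiHom-coalgebra morphism amounts to
\[
(f^{*}\otimes f^{*})\circ\mu'^{*}=\mu^{*}\circ f^{*},\qquad f^{*}\circ\alpha'^{*}=\alpha^{*}\circ f^{*},\qquad f^{*}\circ\beta'^{*}=\beta^{*}\circ f^{*}.
\]

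For the twisting-map conditions, I would evaluate both sides of $f^{*}\circ\alpha'^{*}=\alpha^{*}\circ f^{*}$ on an arbitrary functional $\xi\in G'^{*}$ and test against $g\in G$ via the natural pairing. The identity collapses to $\langle\xi,\alpha'\circ f(g)\rangle=\langle\xi,f\circ\alpha(g)\rangle$ for all $\xi$ and $g$, and non-degeneracy of the pairing (available since $G'$ is finite-dimensional) yields the equivalence with $f\circ\alpha=\alpha'\circ f$. The same argument with $\beta$ in place of $\alpha$ handles the other twisting condition.

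For the comultiplication identity I would dualize both sides and evaluate on an arbitrary simple tensor $x\otimes y\in G\otimes G$. One side reduces to $\langle\xi,\mu'(f(x)\otimes f(y))\rangle$ and the other to $\langle\xi,f(\mu(x\otimes y))\rangle$; varying $\xi,x,y$ and invoking non-degeneracy gives the equivalence with $\mu'\circ(f\otimes f)=f\circ\mu$. The finite-dimensional identifications $(G\otimes G)^{*}\cong G^{*}\otimes G^{*}$ and $(G'\otimes G')^{*}\cong G'^{*}\otimes G'^{*}$ are used implicitly, both to make $\mu^{*}$ land in $G^{*}\otimes G^{*}$ and to interpret $f^{*}\otimes f^{*}$ as the transpose of $f\otimes f$.

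The argument is essentially a bookkeeping of dualizations and presents no substantive obstacle. The only mild subtlety to watch is that the dual coalgebra inherits the twisting maps in the swapped order $(\beta^{*},\alpha^{*})$, but since the two commutation relations with $\alpha'^{*}$ and $\beta'^{*}$ must be imposed simultaneously, the swap has no bearing on the equivalence.
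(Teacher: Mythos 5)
Your proposal is correct and follows essentially the same route as the paper: dualize each defining identity of a BiHom-algebra morphism, test against the natural pairing on simple tensors, and use the finite-dimensional identification $(G\otimes G)^{*}\cong G^{*}\otimes G^{*}$; the paper writes out the forward direction in exactly this way and dispatches the converse as "similar," which your appeal to non-degeneracy of the pairing makes explicit.
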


        \begin{proof}
            Since $f$ is a morphism of BiHom-algebras, then for each $a,b\in G$ and $h\in G'^*$, we have
            \begin{align*}
                \mu^*\circ f^*(h)(a\otimes b)&=\langle \mu^*\circ f^*(h), a\otimes b\rangle \\
                &=\langle h, f\circ\mu(a\otimes b)\rangle\\
                &=\langle h, \mu'\circ (f\otimes f)(a\otimes b)\rangle \\
                &=\langle \mu'^*(h), (f\otimes f)(a\otimes b)\rangle\\
                &=\langle h_{(1)}\otimes h_{(2)}, f(a)\otimes f(b)\rangle\\
                &=\langle f^*(h_{(1)})\otimes f^*(h_{(2)}), a\otimes b\rangle\\
                &=\langle (f^*\otimes f^*)\circ\mu'^*(h), a\otimes b\rangle\\
                &=(f^*\otimes f^*)\circ\mu'^*(h)(a\otimes b).
            \end{align*}
          Here, we note $\mu'^*(h)=h_{(1)}\otimes h_{(2)}$ using the Sweedler's notation. Hence $\mu^*\circ f^*=(f^*\otimes f^*)\circ\mu'^*$. Also we have 
          \begin{align*}
              f^*\circ\alpha'^*(h)(a)&=\langle f^*\circ\alpha'^*(h), a\rangle\\
              &=\langle h, \alpha'\circ f(a)\rangle\\
              &=\langle h, f\circ\alpha(a)\rangle\\
              &=\langle \alpha^*\circ f^*(h) ,a\rangle\\
              &=\alpha^*\circ f^*(h)(a).
          \end{align*}
          Hence $f^*\circ\alpha'^*=\alpha^*\circ f^*$. Similarly, we obtain $f^*\circ\beta'^*=\beta^*\circ f^*$. Therefore, $f^*$ is a morphism of BiHom-coalgebras. Conversely, if $f^*$ is a morphism of BiHom-coalgebras, we use the similar proof to conclude that $f$ is a morphism of BiHom-algebras.
        \end{proof}

       \begin{lemma}[{\cite[Chapter~II]{EABC}}]\label{zz}
Let $A$ and $B$ be $\mathbb K$-vector spaces, and let $I\subseteq A$, $J\subseteq B$ be subspaces.
Let $\pi_A:A\to A/I$ and $\pi_B:B\to B/J$ be the quotient maps, and consider the induced canonical surjection
\[
\pi:=\pi_A\otimes \pi_B:\; A\otimes B \longrightarrow (A/I)\otimes (B/J).
\]
Then
\[
\ker(\pi)=A\otimes J \;+\; I\otimes B,
\]
and hence there is a natural isomorphism
\[
(A\otimes B)/(A\otimes J+I\otimes B)\;\cong\; (A/I)\otimes (B/J).
\]
\end{lemma}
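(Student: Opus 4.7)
The plan is to establish the set equality $\ker(\pi) = A \otimes J + I \otimes B$ and then invoke the first isomorphism theorem to obtain the canonical isomorphism $(A \otimes B)/(A \otimes J + I \otimes B) \cong (A/I) \otimes (B/J)$.

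The inclusion $A \otimes J + I \otimes B \subseteq \ker(\pi)$ is immediate: for $a \in A$, $j \in J$ we have $\pi(a \otimes j) = \pi_A(a) \otimes \pi_B(j) = \pi_A(a) \otimes 0 = 0$, and symmetrically $\pi(i \otimes b) = 0$ for $i \in I$, $b \in B$; bilinearity extends this to the whole sum.

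The reverse inclusion is the delicate point. My approach exploits the fact that every subspace of a $\mathbb K$-vector space admits a complement: choose $I' \subseteq A$ and $J' \subseteq B$ with $A = I \oplus I'$ and $B = J \oplus J'$. The restrictions $\pi_A|_{I'}$ and $\pi_B|_{J'}$ are then linear isomorphisms onto $A/I$ and $B/J$, respectively. Decompose
\begin{equation*}
A \otimes B \;=\; (I \otimes J)\,\oplus\,(I \otimes J')\,\oplus\,(I' \otimes J)\,\oplus\,(I' \otimes J').
\end{equation*}
The map $\pi = \pi_A \otimes \pi_B$ annihilates the first three summands (in each, at least one factor lies in $I$ or $J$) and restricts to the isomorphism $\pi_A|_{I'} \otimes \pi_B|_{J'}$ on the last summand. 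Consequently
\begin{equation*}
\ker(\pi) \;=\; (I \otimes J)\,\oplus\,(I \otimes J')\,\oplus\,(I' \otimes J).
\end{equation*}
It then remains to identify this three-term sum with $I \otimes B + A \otimes J$, which follows from $I \otimes B = (I \otimes J) + (I \otimes J')$ and $A \otimes J = (I \otimes J) + (I' \otimes J)$, together with the reverse containments $I \otimes J' \subseteq I \otimes B$ and $I' \otimes J \subseteq A \otimes J$.

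The main obstacle is avoiding a naive elementary-tensor argument for the hard inclusion, where cancellation issues obscure the kernel; the complement-based decomposition sidesteps this by exhibiting $\ker(\pi)$ as a direct sum of coordinate subspaces. Once the kernel is pinned down, the asserted natural isomorphism is obtained by applying the first isomorphism theorem to the surjection $\pi$, whose surjectivity is clear since $\pi_A$ and $\pi_B$ are surjective.
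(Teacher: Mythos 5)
Your proof is correct. Note that the paper does not prove this lemma at all: it is quoted from Abe's \emph{Hopf Algebras} (Chapter~II) and used as a black box, so there is no in-paper argument to compare against. Your complement-based argument is the standard one and is complete: the easy inclusion $A\otimes J+I\otimes B\subseteq\ker(\pi)$ is immediate, and for the converse the decomposition $A\otimes B=(I\otimes J)\oplus(I\otimes J')\oplus(I'\otimes J)\oplus(I'\otimes J')$ together with the observation that $\pi$ kills the first three summands and restricts to the isomorphism $\pi_A|_{I'}\otimes\pi_B|_{J'}$ on the last one pins down the kernel exactly (if $\pi$ vanishes on a vector, its $I'\otimes J'$-component must vanish by injectivity there). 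The identification of the three-term sum with $I\otimes B+A\otimes J$ and the final appeal to the first isomorphism theorem are both fine. An equivalent route, closer in spirit to the textbook treatment, is to use that over a field every short exact sequence splits, so tensoring the exact sequences $0\to I\to A\to A/I\to 0$ and $0\to J\to B\to B/J\to 0$ preserves exactness and yields the same kernel description; your explicit choice of complements is just a hands-on version of that argument, and arguably more self-contained.
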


\begin{lemma}\label{M}
Let $(G,\mu,\alpha,\beta)$ and $(G',\mu',\alpha',\beta')$ be BiHom-algebras, and let
$f:G\to G'$ be a morphism of BiHom-algebras.
Let $H\subseteq G$ be an ideal and let $\pi:G\to G/H$ be the canonical quotient morphism.
If $\ker(\pi)=H\subseteq \ker(f)$, then there exists a unique BiHom-algebra morphism
$\bar f:G/H\to G'$ such that $\bar f\circ \pi=f$.
\end{lemma}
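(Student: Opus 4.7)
The plan is to proceed in three stages, each essentially a routine extension of the classical quotient construction to the BiHom setting.

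First, since $H=\ker(\pi)\subseteq\ker(f)$, the standard universal property of quotient vector spaces produces a unique linear map $\bar f:G/H\to G'$ with $\bar f\circ\pi=f$. Uniqueness is automatic: any $\bar g$ satisfying $\bar g\circ\pi=f$ agrees with $\bar f$ on $\pi(G)=G/H$.

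Second, I need $G/H$ to carry its own BiHom-algebra structure so that the statement makes sense. Because $H$ is a BiHom-ideal, it is in particular a BiHom-subalgebra, hence $\alpha(H),\beta(H)\subseteq H$, while the absorption property for $b\in H$ gives $\mu(G\otimes H)+\mu(H\otimes G)\subseteq H$. Therefore $\mu$, $\alpha$, $\beta$ descend to well-defined maps $\bar\mu$, $\bar\alpha$, $\bar\beta$ on $G/H$, and the BiHom axioms (\ref{homAssoc})--(\ref{multiplicativity_11}) transfer from $G$ to $G/H$ by applying $\pi$ to both sides of each identity. In particular, $\pi$ itself becomes a surjective BiHom-algebra morphism.

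Third, I verify the three conditions in (\ref{s1}) for $\bar f$. Using $\bar f\circ\pi=f$ together with the fact that both $\pi$ and $f$ are BiHom-algebra morphisms, each identity reduces on classes $\pi(a),\pi(b)$ to the corresponding identity for $f$: for instance,
\[
\bar f(\pi(a)\pi(b))=\bar f(\pi(ab))=f(ab)=f(a)f(b)=\bar f(\pi(a))\bar f(\pi(b)),
\]
and analogously $\bar f\circ\bar\alpha\circ\pi=\bar f\circ\pi\circ\alpha=f\circ\alpha=\alpha'\circ f=\alpha'\circ\bar f\circ\pi$, and similarly for $\beta$. Surjectivity of $\pi$ then lifts these equalities to all of $G/H$.

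The only point genuinely requiring attention is the descent of the BiHom-algebra structure to $G/H$, which hinges on $\alpha$ and $\beta$ preserving $H$; this is built into the ideal definition. Beyond this check, the argument is formally identical to the classical associative case, so I do not anticipate any real obstacle.
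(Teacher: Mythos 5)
Your proof is correct and follows essentially the same route as the paper: the universal property of the linear quotient gives the unique $\bar f$ with $\bar f\circ\pi=f$, and surjectivity of $\pi$ together with the morphism properties of $f$ (and $\pi$) yields the three conditions in \eqref{s1}. Your additional verification that $\mu$, $\alpha$, $\beta$ descend to $G/H$ (using $\alpha(H),\beta(H)\subseteq H$ and the absorption property) is a sound supplement that the paper leaves implicit in calling $\pi$ the canonical quotient morphism.
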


\begin{proof}
Since $H=\ker(\pi)\subseteq \ker(f)$, the universal property of the quotient yields a unique linear map
$\bar f:G/H\to G'$ such that $\bar f\circ \pi=f$.

To show that $\bar f$ is a BiHom-algebra morphism, it suffices to use that $\pi$ is surjective.
For $\bar x,\bar y\in G/H$ pick $x,y\in G$ with $\pi(x)=\bar x$ and $\pi(y)=\bar y$. Then
\[
\bar f\bigl(\mu_{G/H}(\bar x\otimes \bar y)\bigr)
=\bar f\bigl(\pi(\mu(x\otimes y))\bigr)
=f(\mu(x\otimes y))
=\mu'(f(x)\otimes f(y))
=\mu'\bigl(\bar f(\bar x)\otimes \bar f(\bar y)\bigr),
\]
so $\bar f\circ \mu_{G/H}=\mu'\circ(\bar f\otimes \bar f)$.
Similarly, using $f\circ\alpha=\alpha'\circ f$ and $f\circ\beta=\beta'\circ f$, we obtain
$\bar f\circ \alpha_{G/H}=\alpha'\circ \bar f$ and $\bar f\circ \beta_{G/H}=\beta'\circ \bar f$.
\end{proof}

\begin{remark}
By a \emph{regular full homomorphism} we mean the canonical quotient map
$\pi:G\to G/I$ associated with an ideal $I\subseteq G$; equivalently, $\pi$ is a surjective morphism of BiHom-algebras with $\ker(\pi)=I$.
\end{remark}

To extend the duality between BiHom-algebras and BiHom-coalgebras beyond the finite-dimensional setting, we now introduce the Sweedler dual $G^{\circ}$ and record a basic functoriality property for finite-codimensional ideals (Definition~\ref{Sweedler_duality_Hom_alg} and Lemma~\ref{b}).

\begin{definition}\label{Sweedler_duality_Hom_alg}
Let $G$ be a BiHom-algebra over $\mathbb K$.
The \emph{Sweedler duality} (or \emph{finite duality}) for $G$ is the subspace
\[
G^{\circ}
:=\bigl\{\, f\in G^{*}\ \big|\ \exists\ \text{a finite-codimensional ideal } J\subseteq G
\text{ such that } f(J)=0 \,\bigr\}.
\]
\end{definition}

\begin{lemma}\label{b}
    Let $(G,\mu,\alpha,\beta)$ and $(G',\mu',\alpha',\beta')$ be two BiHom-algebras, $f: G\rightarrow G'$ be a morphism of BiHom-algebras. If $J$ is a finite-codimensional ideal of $G'$, then its complete inversion $f^{-1}(J)$ is also a finite-codimensional ideal of $G$.
\end{lemma}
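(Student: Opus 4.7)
The plan is to verify directly that $f^{-1}(J)$ fulfills the BiHom-ideal axioms of Definition~2.2 and then to extract finite codimension by pushing through the canonical quotient $\pi:G'\to G'/J$. The key ingredients are the morphism relations \eqref{s1}, namely $f\circ\mu=\mu'\circ(f\otimes f)$, $f\circ\alpha=\alpha'\circ f$, $f\circ\beta=\beta'\circ f$, together with the fact that, being an ideal, $J$ is in particular a BiHom-subalgebra, so $\alpha'(J)\subseteq J$, $\beta'(J)\subseteq J$ and $J\cdot J\subseteq J$, and it absorbs external multiplication on both sides: $G'\cdot J\subseteq J$ and $J\cdot G'\subseteq J$.

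First, I would check that $f^{-1}(J)$ is a BiHom-subalgebra: for $b,c\in f^{-1}(J)$, one has $f(bc)=f(b)f(c)\in J\cdot J\subseteq J$, $f(\alpha(b))=\alpha'(f(b))\in\alpha'(J)\subseteq J$, and similarly $f(\beta(b))\in\beta'(J)\subseteq J$. Next I would verify the ideal absorption: for $a\in G$ and $b\in f^{-1}(J)$, $f(ab)=f(a)f(b)\in G'\cdot J\subseteq J$ and $f(ba)=f(b)f(a)\in J\cdot G'\subseteq J$, so $ab,ba\in f^{-1}(J)$. Finally, for finite codimension, I would form $\psi:=\pi\circ f:G\to G'/J$; its kernel is exactly $f^{-1}(J)$, so the first isomorphism theorem yields an injective linear map $G/f^{-1}(J)\hookrightarrow G'/J$, and since $\dim_{\mathbb K} G'/J<\infty$ by hypothesis, we conclude $\dim_{\mathbb K} G/f^{-1}(J)\leq \dim_{\mathbb K} G'/J<\infty$.

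There is no genuine obstacle in this proof: it is the expected ``pullback of an ideal is an ideal'' statement from classical associative algebra, transferred verbatim to the BiHom setting with the additional bookkeeping of the twisting maps. The only point that demands a moment of care is recalling that, in the BiHom framework, being an ideal already incorporates closedness under $\alpha$ and $\beta$ through the subalgebra clause, so this stability has to be checked separately from the multiplicative absorption.
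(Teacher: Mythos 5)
Your proposal is correct and follows essentially the same route as the paper: verify stability of $f^{-1}(J)$ under multiplication, absorption and the twisting maps via the morphism relations \eqref{s1}, then obtain finite codimension from the induced injection $G/f^{-1}(J)\hookrightarrow G'/J$. Your formulation of the last step, taking $\psi=\pi\circ f$ directly, is a slightly cleaner packaging of the same first-isomorphism-theorem argument the paper uses.
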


\begin{proof}
    According to equation~\eqref{s1}, we obtain the following, which holds for any \( x \in f^{-1}(J) \) and \( a \in G \):
    \begin{align*}
        f(x)\in J \quad \text{and} \quad f(ax)=f(a)f(x)\in J.
    \end{align*}
    Then \( ax \in f^{-1}(J) \). Similarly, we can show that \( xa \in f^{-1}(J) \).  Now, observe that $f$ is a morphism of BiHom-algebras. Consequently, we have
     $$
     f\circ\alpha(x) = \alpha^{\prime}\circ f(x) \in J,\text{ and } f\circ\beta(x) = \beta^{\prime}\circ f(x) \in J,
     $$
     there by establishing $\alpha(x) \in f^{-1}(J)$ and $\beta(x) \in f^{-1}(J)$. We then proceed with $\alpha(f^{-1}(J)) \subseteq f^{-1}(J)$ and $\beta(f^{-1}(J)) \subseteq f^{-1}(J)$. So $f^{-1}(J)$ is the ideal of BiHom-algebra $G$.

    We now demonstrate that $f^{-1}(J)$ is finite-codimensional. Let $g:G\to G'/J$ be a map defined as follows: for any $a\in G$, $a=a_1+a_2$ where $a_1\in f^{-1}(J)$, we have $g(a)=f(a_2)+J$. It is evident that $\operatorname{Ker}(g) = f^{-1}(J)$. Then we can induce the injection $\widetilde{g}:G/f^{-1}(J)\rightarrow G^{'}/J$ ($b+f^{-1}(J)\mapsto f(b)+J$), so we have
    \begin{align*}
       \dim(G/f^{-1}(J))\leq  \dim(G^{'}/J)<+\infty
    \end{align*}
    which implies that the dimension of $G/f^{-1}(J)$ is finite. Consequently, $f^{-1}(J)$ is a finite-codimensional ideal of $G$.
\end{proof}

Motivated by Liu's construction in the classical setting~\cite{LGL}, we now build a BiHom-coalgebra structure under the Sweedler duality.

\begin{theorem}\label{c}
    Let $G$ be a BiHom-algebra. Then $G^{\circ}$ is the subspace of $G^{*}$.
\end{theorem}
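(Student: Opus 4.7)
The statement is the subspace axioms for $G^{\circ}\subseteq G^{*}$, so my plan is to verify the three standard conditions: nonemptiness, closure under scalar multiplication, and closure under addition, using the finite-codimensional-ideal characterization in Definition~\ref{Sweedler_duality_Hom_alg}.

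First I would observe that $G$ itself is a finite-codimensional (in fact codimension zero) ideal of $G$, and the zero functional annihilates it, so $0\in G^{\circ}$ and $G^{\circ}\neq\emptyset$. For closure under scalar multiplication, given $f\in G^{\circ}$ with $f(J)=0$ for some finite-codimensional ideal $J$, and $c\in\mathbb{K}$, the same ideal $J$ witnesses $cf\in G^{\circ}$ since $(cf)(J)=c\,f(J)=0$.

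The main (still routine) point is closure under addition: given $f_1,f_2\in G^{\circ}$ with finite-codimensional ideals $J_1,J_2$ such that $f_i(J_i)=0$, I would take the witness $J:=J_1\cap J_2$ for $f_1+f_2$. That $J$ is an ideal of the BiHom-algebra follows immediately since the intersection of two BiHom-ideals is stable under left and right multiplication and under both $\alpha$ and $\beta$ (each factor is). Clearly $(f_1+f_2)(J)\subseteq f_1(J_1)+f_2(J_2)=0$. To see that $J$ is finite-codimensional, I would use the standard diagonal map
\[
\Phi:G/J\longrightarrow G/J_1\oplus G/J_2,\qquad g+J\longmapsto (g+J_1,\,g+J_2),
\]
which is well-defined and injective since $\ker\Phi=(J_1\cap J_2)/J=0$. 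Hence
\[
\dim(G/J)\le \dim(G/J_1)+\dim(G/J_2)<\infty,
\]
so $J$ is a finite-codimensional ideal and $f_1+f_2\in G^{\circ}$.

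There is no real obstacle here; the only subtlety worth flagging is that the BiHom structure enters only through the need to check that $J_1\cap J_2$ is again an ideal in the BiHom sense, i.e.\ $\alpha$- and $\beta$-stable in addition to being a two-sided multiplicative ideal, and this is immediate componentwise. The proof is therefore essentially the same as in the classical associative case and in the Hom case of~\cite{SWZZ}, with the two twisting maps treated in parallel.
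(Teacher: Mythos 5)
Your proof is correct and follows essentially the same route as the paper: closure under scalars is immediate, and for addition you take the intersection $J_1\cap J_2$ of the two witnessing finite-codimensional BiHom-ideals and verify it is again a finite-codimensional ideal. The only (harmless) difference is that you prove finite codimension of the intersection via the injective diagonal map $G/(J_1\cap J_2)\to G/J_1\oplus G/J_2$, while the paper argues through the isomorphisms $H/(J\cap H)\cong (J+H)/J$ and $G/H\cong\bigl(G/(J\cap H)\bigr)/\bigl(H/(J\cap H)\bigr)$; both yield the same bound.
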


   \begin{proof}
   It is straightforward to verify that $G^{\circ}$ is closed under $\mathbb K$-multiplication. We proceed to show that $G^{\circ}$ is also closed under addition.

   For any $g,h\in G^{\circ}$, let $J$ and $H$ be two finite-codimensional ideals of $G$ such that $\operatorname{Ker} g\supseteq J$ and $\operatorname{Ker} h\supseteq H$. It is clear that $J\cap H$ is an ideal of the Hom-algebra and $\operatorname{Ker}
    (g+h)\supseteq J\cap H$.

   Our next step is to explain $J\cap H$ is finite-codimensional. Since $H/(J\cap H) \cong (J+H)/J \subseteq G/J$, the quotient $H/(J\cap H)$ is a finite-dimensional linear space.
   Furthermore, since $G/H \cong (G/(J \cap H))/(H/(J \cap H))$, the finite dimension of $G/H$ implies that $dim(G/(J \cap H)) = dim(G/H) + dim(H/(J \cap H))$ is also finite. Consequently, $J \cap H$ is a finite-codimensional ideal of $G$.
   \end{proof}

   \begin{theorem}\label{f}
   Let $(G,\mu,\alpha,\beta)$ be a BiHom-algebra. Then the Sweedler duality of $G$ is a BiHom-coalgebra $(G^{\circ},\Delta,\beta^{\circ},\alpha^{\circ})$, where $\Delta=\mu^{*}|_{G^{\circ}}$, $\alpha^{\circ}=\alpha^{*}|_{G^{\circ}}$, $\beta^{\circ}=\beta^{*}|_{G^{\circ}}$ and $\mu^{*}: G^{*}\rightarrow (G\otimes G)^*$.
   \end{theorem}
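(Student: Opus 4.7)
The plan is to verify in sequence: (i) that $\Delta:=\mu^{*}|_{G^{\circ}}$ lands in $G^{\circ}\otimes G^{\circ}$, (ii) that $\alpha^{\circ}$ and $\beta^{\circ}$ preserve $G^{\circ}$ and still commute there, and (iii) that the three BiHom-coalgebra axioms (coassociativity and the two comultiplicativities) follow by dualizing the defining BiHom-algebra identities.

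The heart of the argument is (i). Given $f\in G^{\circ}$ annihilating a finite-codimensional ideal $J\subseteq G$, the ideal property gives $\mu(J\otimes G + G\otimes J)\subseteq J$, so $f\circ\mu$ vanishes on $J\otimes G + G\otimes J$. By Lemma~\ref{zz}, $f\circ\mu$ then descends to a linear functional $\bar f$ on $(G/J)\otimes(G/J)$. Since $\dim(G/J)<\infty$, the canonical identification $((G/J)\otimes(G/J))^{*}\cong(G/J)^{*}\otimes(G/J)^{*}$ yields a finite decomposition $\bar f=\sum_{i}\bar g_{i}\otimes\bar h_{i}$ with $\bar g_{i},\bar h_{i}\in(G/J)^{*}$. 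Pulling back along the quotient $G\to G/J$ produces $g_{i},h_{i}\in G^{*}$ that vanish on $J$, hence lie in $G^{\circ}$, and therefore $\mu^{*}(f)=\sum_{i}g_{i}\otimes h_{i}\in G^{\circ}\otimes G^{\circ}$.

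For (ii), the $\alpha$- and $\beta$-multiplicativity axioms combined with $\alpha\circ\beta=\beta\circ\alpha$ make each of $\alpha,\beta:G\to G$ a BiHom-algebra endomorphism. Lemma~\ref{b} then says that if $f$ annihilates the finite-codimensional ideal $J$, then $\alpha^{*}(f)=f\circ\alpha$ annihilates the finite-codimensional ideal $\alpha^{-1}(J)$, so $\alpha^{\circ}(f)\in G^{\circ}$; the same holds for $\beta^{\circ}$, and the commutation $\alpha^{\circ}\circ\beta^{\circ}=\beta^{\circ}\circ\alpha^{\circ}$ is the dual of $\alpha\circ\beta=\beta\circ\alpha$. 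For (iii) I would check the required operator identities on arbitrary elementary tensors in $G\otimes G$ or $G\otimes G\otimes G$ via the natural pairing, exactly as in Lemma~\ref{s}: BiHom-coassociativity $(\alpha^{\circ}\otimes\Delta)\circ\Delta=(\Delta\otimes\beta^{\circ})\circ\Delta$ is the dual of $\alpha(g)(hk)=(gh)\beta(k)$, and $\Delta\circ\alpha^{\circ}=(\alpha^{\circ}\otimes\alpha^{\circ})\circ\Delta$, $\Delta\circ\beta^{\circ}=(\beta^{\circ}\otimes\beta^{\circ})\circ\Delta$ are the duals of the $\alpha$- and $\beta$-multiplicativity of $\mu$.

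The main obstacle is step (i): in infinite dimension the natural inclusion $G^{*}\otimes G^{*}\hookrightarrow(G\otimes G)^{*}$ is proper, and nothing about $f$ being an arbitrary functional in $G^{\circ}$ forces $\mu^{*}(f)$ into the subspace $G^{\circ}\otimes G^{\circ}$. It is precisely the fact that $J$ is an \emph{ideal}, not just a finite-codimensional subspace, that allows $f\circ\mu$ to be pushed through the quotient in Lemma~\ref{zz} and then expressed as a finite tensor sum; this is the Sweedler trick that underpins the whole construction, and everything else in the theorem is a bookkeeping exercise once (i) is in place.
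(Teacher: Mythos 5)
Your proposal is correct and follows essentially the same route as the paper: vanishing of $f\circ\mu$ on $J\otimes G+G\otimes J$, descent to $(G/J)\otimes(G/J)$ via Lemma~\ref{zz}, a finite dual-basis decomposition pulled back to $G^{\circ}\otimes G^{\circ}$, and then the axioms checked by pairing against elementary tensors. The only (harmless) variation is in stability of $G^{\circ}$ under $\alpha^{\circ},\beta^{\circ}$: you invoke Lemma~\ref{b} with the ideal $\alpha^{-1}(J)$, while the paper uses directly that the same ideal $J$ is $\alpha$- and $\beta$-stable by the definition of a BiHom-ideal; both arguments are valid.
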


    \begin{proof}
    Let $f \in G^{\circ}$, we want to show that $\Delta(f) \in G^{\circ} \otimes G^{\circ}$. There exists a finite-codimensional ideal $J$ of $G$ such that $f(J) = 0$. Then we do the formal arithmetic
    \begin{align*}
        \Delta(f)(J \otimes G + G \otimes J) = f\circ\mu(J \otimes G + G \otimes J) = 0,
    \end{align*}
    since $\mu(J \otimes G)\subseteq J$ and $\mu(G \otimes J)\subseteq J$. Let a map $\pi\colon G \otimes G \to G \otimes G / (J \otimes G + G \otimes J)$, then we have $\operatorname{Ker}(\pi) = J \otimes G + G \otimes J \subseteq \operatorname{Ker}(f \mu)$. By Lemma~\ref{M}, this induces a unique morphism $\overline{f \mu}\colon G \otimes G / (J \otimes G + G \otimes J) \to \mathbb K$ satisfying $\overline{f \mu} \circ \pi = f\circ \mu$. That is to say $\overline{f \mu} \in (G \otimes G / (J \otimes G + G \otimes J))^*$.

    According to Lemma~\ref{zz}, we have $G \otimes G / (J \otimes G + G \otimes J) \cong (G/J) \otimes (G/J)$ and $G/J$ is finite, it is easy to see that
    \begin{align*}
        \overline{f\mu} \in(G/J)^* \otimes (G/J)^*.
    \end{align*}

    Let $\overline{f\mu} = \sum_{i,j=1}^n k_{ij} \overline{e_i}^* \otimes \overline{e_j}^*$, where $\overline{e_i}^* \in (G/J)^*$ is the duality base elements corresponding to a canonical map $\Tilde{\pi}\colon G \to G/J$. So for any $a,b \in G$, we have
    \begin{align*}
    \langle f\circ\mu, a \otimes b \rangle &= \langle \overline{f\mu} \circ \pi, a \otimes b \rangle \\
    &= \langle \overline{f\mu} , \pi(a \otimes b) \rangle \\
    &= \langle \overline{f\mu}, \Tilde{\pi}(a) \otimes \Tilde{\pi}(b) \rangle \\
    &= \sum_{i,j=1}^n k_{ij}\langle \overline{e}_i^*, \Tilde{\pi}(a) \rangle \langle \overline{e}_j^*, \Tilde{\pi}(b) \rangle.
    \end{align*}

    Denote $e_i^* = \overline{e}_i^* \circ \Tilde{\pi}$. Because of $\operatorname{Ker} (e_i^*) \supseteq J$, we have $e_i^* \in G^{\circ}$. It is straightforward that
    \begin{align*}
        \Delta(f) = f\circ \mu = \sum\limits_{i,j=1}^n k_{ij} e_i^* \otimes e_j^* .
    \end{align*}
    Therefore, we conclude that $\Delta(f) \in G^{\circ} \otimes G^{\circ}$.

    For any $g \in G^{\circ}$, there exists a finite-codimensional ideal $J$ of $G$ such that $\operatorname{Ker}(g) \supseteq J$. Then, since $\alpha(J)\subseteq J$ and $g(J)=0$, we have
    \begin{align*}
        \langle \alpha^{\circ}(g), J \rangle =\langle \alpha^{*}(g), J \rangle= \langle g, \alpha(J) \rangle= 0.
    \end{align*}
    Hence $\operatorname{Ker}(\alpha^{\circ}(g)) \supseteq J$ follows, which implies that $\alpha^{\circ}(g)\in G^\circ$. Similarly, $\beta^{\circ}(g)\in G^\circ$ holds.
    %So we prove that $\alpha^{\circ}(G^{\circ})\subseteq G^{\circ}$.\par
    %Then we will prove that $\Delta\circ\alpha^{\circ}=(\alpha^{\circ}\otimes \alpha^{\circ})\circ\Delta$. 
    
    Using the equation \eqref{multiplicativity_1}, for any $c^*\in G^{\circ}$ and $x,y\in G$, we have
    \begin{align*}
        \langle \Delta\circ\alpha^{\circ}(c^*), x\otimes y\rangle&=\langle c^*, \alpha\circ\mu(x\otimes y)\rangle\\
        &=\langle c^*, \mu\circ(\alpha\otimes\alpha)(x\otimes y)\rangle\\
        &=\langle\Delta(c^*), \alpha(x)\otimes \alpha(y)\rangle\\
        &=\sum_{(c^*)}\langle c^*_{(1)}\otimes c^*_{(2)}, \alpha(x)\otimes \alpha(y)\rangle\\
        &=\sum_{(c^*)}\langle \alpha^{\circ}(c^*_{(1)}), x\rangle\langle \alpha^{\circ}(c^*_{(2)}), y\rangle\\
        &=\langle (\alpha^{\circ}\otimes \alpha^{\circ})(\sum_{(c^*)} c^*_{(1)}\otimes c^*_{(2)}), x\otimes y\rangle\\
        &=\langle (\alpha^{\circ}\otimes \alpha^{\circ})\circ\Delta(c^*), x\otimes y\rangle.
    \end{align*}
    Equivalently speaking, we have $\Delta\circ\alpha^{\circ}=(\alpha^{\circ}\otimes \alpha^{\circ})\circ\Delta$. Similarly, we obtain $\Delta\circ\beta^{\circ}=(\beta^{\circ}\otimes \beta^{\circ})\circ\Delta$.\par
    
    It is sufficient to verify that
    $(\alpha^{\circ}\otimes \Delta)\circ\Delta=(\Delta \otimes \beta^{\circ})\circ\Delta.$
     For any $c^{*}\in G^{\circ}$ and $x,y,z\in G$, we have
    \begin{align*}
  \langle (\alpha^{\circ} \otimes \Delta) \circ \Delta(c^*), x \otimes y \otimes z \rangle
  &= \langle (\alpha^{\circ} \otimes \Delta) \circ (\sum_{(c^*)} c^*_{(1)} \otimes c^*_{(2)}) , x \otimes y \otimes z \rangle \\
  &= \sum_{(c^*)} \langle \alpha^{\circ}(c^*_{(1)}) \otimes \Delta(c^*_{(2)}), x \otimes y \otimes z \rangle \\
  &=\sum_{(c^*)} \langle \alpha^{\circ}(c^*_{(1)}), x\rangle\langle\Delta(c^*_{(2)}),  y \otimes z \rangle \\
  &= \sum_{(c^*)} \langle c^*_{(1)}, \alpha(x) \rangle \langle c^*_{(2)}, yz \rangle \\
  &=\sum_{(c^*)} \langle c^*_{(1)}\otimes c^*_{(2)}, \alpha(x)\otimes yz\rangle\\
  &=\langle \Delta(c^*), \alpha(x)\otimes yz\rangle\\
  &= \langle c^*, \alpha(x) yz \rangle.
\end{align*}
    Similarly,
    \begin{align*}
  \langle (\Delta \otimes \beta^{\circ}) \circ \Delta(c^*), x \otimes y \otimes z \rangle &= \langle c^*, x y \beta(z) \rangle.
\end{align*}

    Due to the BiHom-associativity (\ref{homAssoc}) of $G$, we have $\alpha(x)yz=xy\beta(z)$. Thus $(\alpha^{\circ}\otimes \Delta)\circ\Delta=(\Delta \otimes \beta^{\circ})\circ\Delta$. %Therefore we prove this theorem.
\end{proof}

Without specific instructions, we will examine the properties of the BiHom-coalgebra $(G^{\circ},\Delta,\beta^{\circ},\alpha^{\circ})$, Sweedler duality of $G$ as described in Theorem~\ref{f}. Based on these considerations, we then derive the morphism of BiHom-coalgebras accordingly.

\begin{corollary}\label{f1}
    Let $(G,\mu,\alpha,\beta)$ and $(G',\mu',\alpha',\beta')$ be two BiHom-algebras, $(G^{\circ},\Delta,\beta^{\circ},\alpha^{\circ})$ and $(G'^{\circ},\Delta',\beta'^{\circ},\alpha'^{\circ})$ two corresponding objects under the Sweedler duality. The map $f: G\rightarrow G'$ is a morphism of BiHom-algebras. Note that the map $f^{\circ} = f^{*}|_{G'^{\circ}}:G'^{\circ}\rightarrow G^{*}$. Thus, $f^{\circ}$ is a morphism of BiHom-coalgebras.
\end{corollary}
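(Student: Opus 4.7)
The plan is to verify two things in order. First, show that the image of $f^{\circ}$ actually lies in $G^{\circ}$, not merely in $G^{*}$, so that it makes sense to speak of $f^{\circ}:G'^{\circ}\to G^{\circ}$ as a potential BiHom-coalgebra morphism. Second, check that this restriction intertwines the comultiplications and the two twisting maps on the Sweedler duals. Both steps rely on tools already at hand: Lemma~\ref{b} for the image-containment step and the computational pattern of Lemma~\ref{qq} for the morphism axioms.

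For the image containment, I would take $h\in G'^{\circ}$ and choose a finite-codimensional ideal $J\subseteq G'$ with $h(J)=0$. By Lemma~\ref{b}, $f^{-1}(J)$ is a finite-codimensional ideal of $G$, and for any $x\in f^{-1}(J)$ one has
\[
\langle f^{\circ}(h),x\rangle=\langle h,f(x)\rangle=0.
\]
Thus $f^{\circ}(h)$ annihilates $f^{-1}(J)$, which places it in $G^{\circ}$, and consequently $f^{\circ}$ restricts to a linear map $G'^{\circ}\to G^{\circ}$.

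For the morphism axioms, I would adapt the computation of Lemma~\ref{qq}, being careful that all dualized multiplications now land in $G^{\circ}\otimes G^{\circ}$ rather than in the larger $(G\otimes G)^{*}$. Given $h\in G'^{\circ}$, Theorem~\ref{f} guarantees that $\Delta'(h)\in G'^{\circ}\otimes G'^{\circ}$, so both $(f^{\circ}\otimes f^{\circ})\Delta'(h)$ and $\Delta(f^{\circ}(h))$ lie in $G^{\circ}\otimes G^{\circ}$. Pairing each with an arbitrary $x\otimes y\in G\otimes G$ and using $f\circ\mu=\mu'\circ(f\otimes f)$ exactly as in Lemma~\ref{qq} yields $\Delta\circ f^{\circ}=(f^{\circ}\otimes f^{\circ})\circ\Delta'$. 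The compatibilities $f^{\circ}\circ\alpha'^{\circ}=\alpha^{\circ}\circ f^{\circ}$ and $f^{\circ}\circ\beta'^{\circ}=\beta^{\circ}\circ f^{\circ}$ follow in one line by pairing with $x\in G$ and invoking $f\circ\alpha=\alpha'\circ f$ and $f\circ\beta=\beta'\circ f$.

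No step presents a genuine obstacle. The only subtle point is ensuring that $f^{\circ}$ actually lands in $G^{\circ}$, which is precisely what Lemma~\ref{b} supplies via the pullback of finite-codimensional ideals. Once this is in place, the morphism identities reduce to pairings in which the BiHom-algebra morphism conditions on $f$ transpose directly into the required BiHom-coalgebra morphism conditions on $f^{\circ}$.
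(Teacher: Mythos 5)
Your proposal is correct and follows essentially the same route as the paper: first use Lemma~\ref{b} to pull back a finite-codimensional ideal and conclude $f^{\circ}(G'^{\circ})\subseteq G^{\circ}$, then verify $\Delta\circ f^{\circ}=(f^{\circ}\otimes f^{\circ})\circ\Delta'$ and the compatibilities with $\alpha^{\circ},\beta^{\circ}$ by the same pairing computations driven by \eqref{s1}. The only (harmless) difference is that you invoke the pattern of Lemma~\ref{qq} rather than writing the chain of pairings out in full.
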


\begin{proof}
It suffices to prove that $f^{\circ}(G'^{\circ})\subseteq G^{\circ}$.
Let $b^{*}\in G'^{\circ}$, and choose a finite-codimensional ideal $J\subseteq G'$ with $b^{*}(J)=0$.
By Lemma~\ref{b}, the inverse image $f^{-1}(J)$ is a finite-codimensional ideal of the BiHom-algebra $G$.

    Notice that
    \begin{align*}
        \langle f^{\circ}(b^{*}), f^{-1}(J) \rangle  = \langle b^{*}, f\circ f^{-1}(J) \rangle 
                          = \langle b^{*}, J \rangle 
                          = 0,
    \end{align*}
    which shows that $\operatorname{Ker}(f^{\circ}(b^{*})) \supseteq f^{-1}(J)$. In other words, $f^{\circ}(b^{*}) \in G^{\circ}$. Consequently, $f^{\circ}(G'^{\circ}) \subseteq G^{\circ}$.

    Using the equation (\ref{s1}), for any $b^{*} \in G^{\prime \circ}$ and $a, c \in G^{\circ}$, we have
    \begin{align*}
        \langle \Delta\circ f^{\circ}(b^{*}), a \otimes c \rangle &= \langle b^{*}, f\circ \mu(a \otimes c) \rangle \\
        &= \langle b^{*}, \mu^{\prime}\circ(f \otimes f)(a \otimes c) \rangle \\
        &= \langle  \Delta'(b^{*}), f(a) \otimes f(c) \rangle \\
        &=\sum_{(b^*)}\langle  b^*_{(1)}\otimes  b^*_{(2)}, f(a) \otimes f(c) \rangle \\
        &=\sum_{(b^*)}\langle  b^*_{(1)}, f(a)\rangle\langle  b^*_{(2)}, f(c) \rangle \\
        &=\sum_{(b^*)}\langle  f^{\circ} (b^*_{(1)}), a\rangle\langle  f^{\circ}(b^*_{(2)}), c \rangle \\
        &=\langle (f^{\circ} \otimes f^{\circ})(\sum_{(b^*)} b^*_{(1)}\otimes  b^*_{(2)}), a \otimes c \rangle\\
        &= \langle (f^{\circ} \otimes f^{\circ})\circ \Delta'(b^{*}), a \otimes c \rangle.
    \end{align*}
It means we obtain that $\Delta\circ f^{\circ} = (f^{\circ} \otimes f^{\circ})\circ \Delta'$.

    For any $b^* \in G'^{\circ}$ and $a \in G$, we have
\begin{align*}
    \langle \alpha^{\circ}\circ f^{\circ}(b^*), a \rangle = \langle b^*, f\circ \alpha(a) \rangle = \langle b^*, \alpha'\circ f(a) \rangle
    = \langle f^{\circ}\circ \alpha'^{\circ}(b^*), a \rangle.
\end{align*}
Therefore, it follows that $\alpha^{\circ}\circ f^{\circ} = f^{\circ}\circ \alpha'^{\circ}$. Similarly, we obtain $\beta^{\circ}\circ f^{\circ} = f^{\circ}\circ \beta'^{\circ}$. Therefore the desired result holds.
\end{proof}

Using the Sweedler duality that we constructed, ordinary BiHom-associative algebras naturally get an infinite-dimensional correspondence. We construct this structure over the Multivariate Polynomial in the following.

\begin{example}[Multivariate polynomial BiHom-algebras]
Let $\mathbb K$ be a field and we consider the BiHom-algebra $(\mathbb K[x_1,x_2,\cdots,x_r],\cdot,\alpha,\beta)$ where $\mathbb K[x_1,x_2,\cdots,x_r]$ is the set of the polynomials in $r$ variables, $\alpha$ and $\beta$ are both the linear maps acting on the indeterminate $x$ with the property $\alpha(\kappa x^{\mathbf m})=\kappa\prod_{k=1}^r(\sum_{l=1}^r a_{lk}x_l)^{m_k}$ and $\beta(\tau x^{\mathbf n})=\tau\prod_{k=1}^r(\sum_{l=1}^r b_{lk}x_l)^{n_k}$ for each coefficients $\kappa,\tau\in\mathbb K$, multi-indices $\mathbf m=(m_1,m_2,\cdots, m_r)$ and $\mathbf n=(n_1,n_2,\cdots, n_r)$ and the corresponding monomials $x^{\mathbf m}=x_1^{m_1}x_2^{m_2}\cdots x_r^{m_r}$ and $x^{\mathbf n}=x_1^{n_1}x_2^{n_2}\cdots x_r^{n_r}$, $A=(a_{ij})_{r\times r}\in \mathbb K^{r\times r}$ and $B=(b_{ij})_{r\times r}\in \mathbb K^{r\times r}$, satisfying $AB=BA$, which implies $\alpha\circ\beta=\beta\circ\alpha$. 
Then the bilinear operator $\cdot:\mathbb K[x_1,x_2,\cdots,x_r]\rightarrow \mathbb K[x_1,x_2,\cdots,x_r]$ is defined by 
\[
  x^{\mathbf m}\cdot x^{\mathbf n}=\alpha(x^{\mathbf m})\beta(x^{\mathbf n})=\Big(\sum_{\mathbf p}A_{\mathbf m}^{\mathbf{p}}x^{\mathbf p}\Big)\Big(\sum_{\mathbf q}B_{\mathbf n}^{\mathbf{q}}x^{\mathbf q}\Big)=\sum_{\mathbf{p},\mathbf{q}}A_{\mathbf m}^{\mathbf{p}}B_{\mathbf n}^{\mathbf{q}}x^{\mathbf p+\mathbf q}.
\]
Here, 
$$\sum_{\mathbf p}A_{\mathbf m}^{\mathbf{p}}x^{\mathbf p}=\prod_{k=1}^r(\sum_{l=1}^r a_{lk}x_l)^{m_k}=\sum_{\substack{
M=(m_{lk})_{r\times r}\in \mathbb N^{r\times r}, \text{ each }m_{lk}\geq 0,\\
\sum_{l=1}^r m_{lk}=m_k,\ \\[2pt]
\sum_{k=1}^r m_{lk}=p_l\ 
}}
\left(\prod_{k=1}^{r}\frac{m_k!}{\prod_{l=1}^{r} m_{lk}!}\right)
\left(\prod_{l,k=1}^{r} a_{lk}^{\,m_{lk}}\right)x_{l}^{m_{lk}}
$$ 
and 
$$\sum_{\mathbf p}B_{\mathbf n}^{\mathbf{q}}x^{\mathbf q}=\prod_{k=1}^r(\sum_{l=1}^r b_{lk}x_l)^{n_k}=\sum_{\substack{
N=(n_{lk})_{r\times r}\in \mathbb N^{r\times r}, \text{ each }n_{lk}\geq 0,\\
\sum_{l=1}^r n_{lk}=n_k,\ \\[2pt]
\sum_{k=1}^r n_{lk}=q_l\ 
}}
\left(\prod_{k=1}^{r}\frac{n_k!}{\prod_{l=1}^{r} n_{lk}!}\right)
\left(\prod_{l,k=1}^{r} b_{lk}^{\,n_{lk}}\right)x_{l}^{n_{lk}}
$$ 
with the related multi-indices $\mathbf p=(p_1,p_2,\cdots, p_r)$ and $\mathbf q=(q_1,q_2,\cdots, q_r)$.

 For the monomials $\mathbf m=(m_1,m_2,\cdots, m_r)$, $\mathbf n=(n_1,n_2,\cdots, n_r)$ and $\mathbf N=(N_1,N_2,\cdots, N_r)$, a linear functional $\varphi \in A^{*}=\operatorname{Hom}_{\mathbb K}(\mathbb K[x_1,x_2,\cdots,x_r],\mathbb K)$ lies in the \emph{Sweedler duality}
\[
  A^{\circ}
  \;=\;
  \bigl\{\varphi\in A^{*}\mid 
         \varphi\!\bigl(I\bigr)=0, \text{ $I=\sum_{\mathbf m}x^{\mathbf m}\mathbb K[x_1,x_2,\cdots,x_r]$ satisfying }\exists~ \mathbf N \text{ such that } x^{\mathbf{n}}\in I \text{ if } \exists~ 1\leq i\leq r, n_i-N_i\geq 0
     \bigr\}
\]
Hence $(A^{\circ},\Delta,\eta,\mu)$ is the BiHom-coalgebra induced by the Sweedler duality where the coordinate functionals span the maps $\mu$ and $\eta$
\[
  \mu(d_{\mathbf n})(x^{\mathbf m})=d_{\mathbf n}(\alpha(x^{\mathbf m})) =d_{\mathbf n}(\sum_{\mathbf p}A_{\mathbf m}^{\mathbf p}x^{\mathbf p})= \sum_{\mathbf p}A_{\mathbf m}^{\mathbf p}\delta_{\mathbf n,\mathbf p},
  \qquad \mathbf n=(n_1,n_2,\cdots,n_r),~\mathbf m=(m_1,m_2,\cdots,m_r),
\]
and 
\[
  \eta(d_{\mathbf n})(x^{\mathbf m})=d_{\mathbf n}(\beta(x^{\mathbf m})) =d_{\mathbf n}(\sum_{\mathbf q}B_{\mathbf m}^{\mathbf q}x^{\mathbf q})= \sum_{\mathbf q}B_{\mathbf m}^{\mathbf q}\delta_{\mathbf n,\mathbf q},
  \qquad \mathbf n=(n_1,n_2,\cdots,n_r),~\mathbf m=(m_1,m_2,\cdots,m_r).
\]
dualising the multiplication $\cdot\colon \mathbb K[x]\!\otimes\!\mathbb K[x]\to\mathbb K[x]$ yields a comultiplication
\[
  \Delta(d_{\mathbf n})
  \;=\;
  \sum_{\mathbf i+\mathbf j=\mathbf n} \mu(d_{\mathbf i})\otimes \eta(d_{\,\mathbf j}),
  \qquad \mathbf n=(n_1,n_2,\cdots,n_r).
\]

Specially, when $r=1$, it is the BiHom-(co)algebra over the univariant polynomial ring $\mathbb K[x]$ and $I=(x^{N})$, the coefficient matrices $A,B$ revert to numbers in $\mathbb K$; when $r=1$ and $\alpha=\beta$, the structure of Hom-(co)algebra in~\cite[Example 2.12]{SWZZ} holds.
\end{example}

 \section{Sweedler duality for BiHom-modules}\label{sect3}
In this section we recall BiHom-(co)modules and their morphisms (cf.~\cite[Definition~4.1]{DY} for the Hom case) and formulate the Sweedler dual construction in this context.
For a right BiHom-module $M$ over a BiHom-algebra $G$ and an ideal $J\subseteq G$, we denote by $M\cdot J$ the $\mathbb K$-subspace of $M$ spanned by all elements $m\cdot j$ with $m\in M$ and $j\in J$; equivalently, $M\cdot J$ is the product submodule usually written $MJ$.

    \begin{definition}
        Let $(G, \mu, \alpha,\beta)$ be a BiHom-algebra. A right BiHom-module of $G$ is a quadruple $(M, \rho, \kappa,\tau)$ where $M$ is a linear space, a linear map $\rho: M \otimes G \rightarrow M$ with $\rho(m\otimes g)=m\cdot g$ and two commutative linear maps $\kappa,\tau: M \rightarrow M$, that is, $\kappa\circ\tau=\tau\circ\kappa$, satisfying:
        \begin{align}
            \rho\circ(\rho\otimes\beta)&=\rho\circ(\kappa\otimes\mu),\quad \text{(BiHom-associativity)}\label{zd1}\\
        \rho\circ(\kappa\otimes\alpha)&=\kappa\circ\rho,\quad \quad \quad \quad
        \text{($\kappa$-multiplicativity)}\\
        \rho\circ(\tau\otimes\beta)&=\tau\circ\rho,\quad \quad \quad \quad
        \text{($\tau$-multiplicativity)}
        \end{align}
        
    \end{definition}

    We will refer to a right BiHom-module $(M,\rho,\kappa,\tau)$ as $M$.
     Let $(M,\rho,\kappa,\tau)$ and $(N,\tilde{\rho},\tilde{\kappa},\tilde{\tau})$ be two right BiHom-modules of $G$. A map $\sigma: M \rightarrow N$ is said to be a morphism of right BiHom-modules when the following conditions hold:
     \begin{align}\label{s7}
    \sigma\circ \rho = \tilde{\rho} \circ (\sigma\otimes\operatorname{Id}_G), \quad \quad
    \tilde{\kappa} \circ \sigma = \sigma \circ \kappa, \quad \quad
    \tilde{\tau} \circ \sigma = \sigma \circ \tau.
       \end{align}

\begin{definition}
    Let $(C, \Delta, \psi,\phi)$ be a BiHom-coalgebra. A right BiHom-comodule of $C$ is a triple $(A, \gamma, \omega,\theta)$ where $A$ is a linear space, a linear map $\gamma: A \rightarrow A\otimes C$ with $\gamma(a)=\sum_{(a)}a_{(0)}\otimes a_{(1)}$ and two commutative linear maps $\omega,\theta: C \rightarrow C$, i.e., $\omega\circ\theta=\theta\circ\omega$, satisfying:
    \begin{align*}
     (\gamma\otimes\psi)\circ\gamma&=(\theta\otimes\Delta)\circ\gamma,\quad \quad \,\,\text{(BiHom-coassociativity)}\\
    (\omega\otimes\psi)\circ\gamma&=\gamma\circ\omega,\quad \quad \quad \quad \quad \text{($\omega$-comultiplicativity)} \\
    (\theta\otimes\phi)\circ\gamma&=\gamma\circ\theta.\quad \quad \quad \quad \quad \text{($\theta$-comultiplicativity)} 
    \end{align*}
    
\end{definition}

We write $(A,\gamma,\omega,\theta)$ for a right BiHom-comodule over a BiHom-coalgebra $C$.
Given two right BiHom-comodules $(A,\gamma,\omega,\theta)$ and $(A',\gamma',\omega',\theta')$ over $C$,
a linear map $f:A\to A'$ is called a morphism of right BiHom-comodules if
\begin{align*}
\gamma'\circ f = (f\otimes \mathrm{Id}_C)\circ \gamma,\quad
\omega'\circ f = f\circ \omega,\quad
\theta'\circ f = f\circ \theta.
\end{align*}

\begin{definition}
    The \textit{Sweedler duality} for the right BiHom-module $M$ based on the ordinary duality $M^*$ is defined by
    $$M^{\circ} := \{ f \in M^{*} \mid f(M\cdot I) = 0, I \text{ is a  finite-codimensional ideal of  G  } \}.$$
\end{definition}

Now we give the isomorphism relation in dual spaces (see~\cite{LGL})  and
the structure of BiHom-comodule induced by the universal duality as follows.
\begin{lemma}\label{iso}
    Let $U$ and $V$ be two linear spaces. Denote $\rho:U^*\otimes V^*\rightarrow (U\otimes V)^*$ with corresponding rule $\rho(f\otimes g)(x\otimes y)=f(x)g(y)$, for any $f\in U^*$, $g\in V^*$, $x\in U$ and $y\in V$. If $\operatorname{dim} U<+\infty$ or $\operatorname{dim} V<+\infty$, $\rho$ is an isomorphism of linear spaces.
\end{lemma}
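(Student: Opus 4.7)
The plan is to prove Lemma~\ref{iso} by a standard basis argument, exploiting the finite-dimensionality hypothesis to put every element of $U^{*}\otimes V^{*}$ into a canonical normal form with finitely many terms. Without loss of generality I shall treat only the case $\dim U<+\infty$; the case $\dim V<+\infty$ is entirely analogous after interchanging the roles of $U$ and $V$ (or one can reduce to it via the obvious braiding $U^{*}\otimes V^{*}\to V^{*}\otimes U^{*}$ composed with the flip $U\otimes V\to V\otimes U$).

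First I would verify that $\rho$ is well-defined and linear: the prescription $(f,g)\mapsto\bigl[(x,y)\mapsto f(x)g(y)\bigr]$ is bilinear in $(f,g)$, and the resulting expression is also bilinear in $(x,y)$, so by the universal property of the tensor product it descends to a unique linear map $\rho:U^{*}\otimes V^{*}\to(U\otimes V)^{*}$ satisfying the formula in the statement.

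Next I would fix a basis $\{e_{1},\dots,e_{n}\}$ of $U$ together with its dual basis $\{e_{1}^{*},\dots,e_{n}^{*}\}$. Since $U^{*}$ is $n$-dimensional with this basis, every element of $U^{*}\otimes V^{*}$ admits a unique expression $\sum_{i=1}^{n}e_{i}^{*}\otimes g_{i}$ with $g_{i}\in V^{*}$. For injectivity, if $\rho\bigl(\sum_{i}e_{i}^{*}\otimes g_{i}\bigr)=0$, then evaluating at $e_{j}\otimes y$ for arbitrary $y\in V$ yields $g_{j}(y)=0$, so each $g_{j}$ vanishes and the original tensor is zero. For surjectivity, given $\varphi\in(U\otimes V)^{*}$ I would define $g_{i}\in V^{*}$ by $g_{i}(y):=\varphi(e_{i}\otimes y)$ and then check that $\rho\bigl(\sum_{i}e_{i}^{*}\otimes g_{i}\bigr)=\varphi$ by writing a general $x\in U$ as $x=\sum_{i}e_{i}^{*}(x)e_{i}$ and invoking linearity of $\varphi$ in the first slot.

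There is no genuine obstacle here, as the result is a textbook fact. The only point deserving attention is that the finite-dimensionality hypothesis is essential exactly at the normal-form step: without it, one cannot absorb an arbitrary element of $U^{*}\otimes V^{*}$ into a fixed finite index set, and $\rho$ genuinely fails to be surjective in the purely infinite-dimensional case. In the writeup I would make this dependence on the hypothesis explicit so that the reader sees precisely where it is used.
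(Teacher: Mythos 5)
Your argument is correct and complete: the well-definedness via the universal property, the normal form $\sum_{i}e_{i}^{*}\otimes g_{i}$ relative to a dual basis of the finite-dimensional factor, and the injectivity/surjectivity checks are exactly the standard proof of this textbook fact. The paper itself does not prove Lemma~\ref{iso} but only cites \cite{LGL}, so your writeup simply supplies the expected argument, and your remark locating where finite-dimensionality enters (the finite normal form, without which surjectivity can fail) is accurate.
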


\begin{lemma} \label{q}
    Let $(G, \mu, \alpha,\beta)$ be a finite-dimensional BiHom-algebra and $(M, \rho, \kappa,\tau)$ be a right BiHom-module of $G$. Then, $(M^{*}, \rho^{*},\tau^{*}, \kappa^{*})$ is a right BiHom-comodule of a BiHom-coalgebra $(G^{*}, \mu^{*}, \beta^{*},\alpha^{*})$, where $\rho^{*}: M^{*} \rightarrow M^{*} \otimes G^{*}$ and $\kappa^{*},\tau^*:M^*\rightarrow M^*$.
\end{lemma}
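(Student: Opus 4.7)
The plan is to verify the three right BiHom-comodule axioms for $(M^{*},\rho^{*},\tau^{*},\kappa^{*})$ by dualizing the three right BiHom-module axioms for $(M,\rho,\kappa,\tau)$, with Lemma~\ref{iso} used in an essential way. The ambient quadruple $(G^{*},\mu^{*},\beta^{*},\alpha^{*})$ is already a BiHom-coalgebra by Lemma~\ref{s}. Under the identification $\omega=\tau^{*}$, $\theta=\kappa^{*}$, $\psi=\beta^{*}$ and $\phi=\alpha^{*}$, the three identities to check are
\begin{align*}
(\rho^{*}\otimes\beta^{*})\circ\rho^{*} &= (\kappa^{*}\otimes\mu^{*})\circ\rho^{*}, \\
(\tau^{*}\otimes\beta^{*})\circ\rho^{*} &= \rho^{*}\circ\tau^{*}, \\
(\kappa^{*}\otimes\alpha^{*})\circ\rho^{*} &= \rho^{*}\circ\kappa^{*}.
\end{align*}
Commutativity $\kappa^{*}\circ\tau^{*}=\tau^{*}\circ\kappa^{*}$ is immediate by dualizing $\kappa\circ\tau=\tau\circ\kappa$.

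First I would invoke the hypothesis $\dim G<\infty$ and Lemma~\ref{iso} to identify $(M\otimes G)^{*}\cong M^{*}\otimes G^{*}$ and $(M\otimes G\otimes G)^{*}\cong M^{*}\otimes G^{*}\otimes G^{*}$. This guarantees that $\rho^{*}:M^{*}\to(M\otimes G)^{*}$ actually lands in $M^{*}\otimes G^{*}$, and that both sides of the displayed identities live in $M^{*}\otimes G^{*}\otimes G^{*}$ where they can be compared. Next, for each identity I would evaluate both sides as functionals against simple tensors: given $\xi\in M^{*}$, $m\in M$ and $g,h\in G$, I apply $\langle\rho^{*}(\xi),m\otimes g\rangle=\langle\xi,\rho(m\otimes g)\rangle$ iteratively and then transport the equality across using the corresponding BiHom-module axiom inside $\xi$. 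Concretely, the BiHom-associativity $\rho\circ(\rho\otimes\beta)=\rho\circ(\kappa\otimes\mu)$ dualizes (using $(f\circ g)^{*}=g^{*}\circ f^{*}$ and $(f\otimes g)^{*}=f^{*}\otimes g^{*}$ under the above identification) to the BiHom-coassociativity, while the $\kappa$- and $\tau$-multiplicativity of $\rho$ dualize to the $\kappa^{*}$- and $\tau^{*}$-comultiplicativity of $\rho^{*}$.

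I do not expect any genuine obstacle: once the finite-dimensional tensor identifications are set up, the argument follows exactly the pattern already used for $\mu^{*}$ in Lemma~\ref{s} and for $f^{*}$ in Lemma~\ref{qq}. The only delicate point, which is in fact the raison d'\^etre of this paper, is that these identifications break down when $G$ is infinite-dimensional; in that regime $\rho^{*}$ need not take values in $M^{*}\otimes G^{*}$, and this is precisely what forces the passage to the Sweedler dual $M^{\circ}\subseteq M^{*}$ treated in Theorem~\ref{f2}.
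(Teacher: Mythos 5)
Your proposal is correct and follows essentially the same route as the paper: after invoking Lemma~\ref{s} for $(G^{*},\mu^{*},\beta^{*},\alpha^{*})$ and the finite-dimensional identification $(M\otimes G)^{*}\cong M^{*}\otimes G^{*}$, the paper verifies exactly the three identities you list, namely $\rho^{*}\circ\kappa^{*}=(\kappa^{*}\otimes\alpha^{*})\circ\rho^{*}$, $\rho^{*}\circ\tau^{*}=(\tau^{*}\otimes\beta^{*})\circ\rho^{*}$ and $(\rho^{*}\otimes\beta^{*})\circ\rho^{*}=(\kappa^{*}\otimes\mu^{*})\circ\rho^{*}$, by pairing against simple tensors and feeding in the corresponding BiHom-module axioms. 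Your explicit appeal to Lemma~\ref{iso} and your closing remark about the failure of this identification in infinite dimension are consistent with (and slightly more explicit than) the paper's treatment.
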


\begin{proof}
    By Lemma~\ref{s}, we can find $(G^{*}, \mu^{*},\beta^*, \alpha^{*})$ is a BiHom-coalgebra. %Then we want to show that $\psi^*\circ\gamma^*=(\gamma^*\otimes \alpha^*)\circ\psi^*$.
    For any $m^*\in M^*$, $m\in M$ and $g\in G$, we have
    \begin{align*}
        \langle \rho^*\circ\kappa^*(m^*), m\otimes g\rangle&=\langle m^*, \kappa\circ\rho(m\otimes g)\rangle\\
        &=\langle m^*, \rho\circ(\kappa\otimes \alpha)(m\otimes g)\rangle\\
        &=\langle \rho^*(m^*), \kappa(m)\otimes \alpha(g)\rangle\\
        &=\sum_{(m^*)}\langle m^*_{(0)}\otimes m^*_{(1)}, \kappa(m)\otimes \alpha(g)\rangle\\
        &=\sum_{(m^*)}\langle \kappa^*(m^*_{(0)}), m\rangle\langle \alpha^*(m^*_{(1)}), g\rangle\\
        &=\sum_{(m^*)}\langle (\kappa^*\otimes \alpha^*)(m^*_{(0)}\otimes m^*_{(1)}), m\otimes g\rangle\\
        &=\langle (\kappa^*\otimes \alpha^*)\circ\rho^*(m^*),  m\otimes g\rangle.
    \end{align*}
    Therefore we have $\rho^*\circ\kappa^*=(\kappa^*\otimes \alpha^*)\circ\rho^*$. Similarly, $\rho^*\circ\tau^*=(\tau^*\otimes \beta^*)\circ\rho^*$.

    Now it is sufficient for us to check that $(\rho^*\otimes\beta^*)\circ\rho^*=(\kappa^*\otimes\mu^*)\circ\rho^*$. For any $a^*\in M^*$ and $x\in M, y,z\in G$,
    \begin{align*}
        \langle(\rho^*\otimes\beta^*)\circ\rho^*(a^*), x\otimes y\otimes z\rangle&=\langle(\rho^*\otimes\beta^*)(\sum_{(a^*)}a^*_{(0)}\otimes a^*_{(1)}), x\otimes y\otimes z\rangle\\
        &=\sum_{(a^*)}\langle \rho^*(a^*_{(0)}), x\otimes y \rangle\langle \beta^*(a^*_{(1)}), z \rangle\\
        &=\sum_{(a^*)}\langle a^*_{(0)}, \rho(x\otimes y) \rangle\langle a^*_{(1)}, \beta(z)\rangle\\
        &=\langle \rho^*(a^*), (\rho\otimes\beta)(x\otimes y\otimes z)\rangle\\
        &=\langle a^*, \rho\circ(\rho\otimes\beta)(x\otimes y\otimes z)\rangle.
    \end{align*}
    Similarly, we have $\langle(\kappa^*\otimes\mu^*)\circ\rho^*(a^*), x\otimes y\otimes z\rangle=\langle a^*, \rho\circ(\kappa\otimes\mu)(x\otimes y\otimes z)\rangle$. 
    
    According to the equation~\eqref{zd1}, It can be shown that $(\rho^*\otimes\beta^*)\circ\rho^*=(\kappa^*\otimes\mu^*)\circ\rho^*$. This completes the proof of the lemma.
\end{proof}

We now pass to the module setting and start by recording that the Sweedler duality for a right BiHom-module is a linear subspace.

\begin{theorem}\label{c'}
Let $G$ be a BiHom-algebra and $M$ a right BiHom-module over $G$.
Then $M^{\circ}$ is a $\mathbb K$-subspace of $M^{*}$.
\end{theorem}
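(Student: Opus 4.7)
The plan is to imitate the proof of Theorem~\ref{c} almost verbatim, adapting it from ideals of $G$ to annihilating sets of the form $M\cdot I$.

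First I would observe that $0\in M^\circ$ (any finite-codimensional ideal witnesses this), and that closure under scalar multiplication is immediate: if $f\in M^{*}$ satisfies $f(M\cdot I)=0$ for some finite-codimensional ideal $I\subseteq G$, then $(\lambda f)(M\cdot I)=0$ as well, so $\lambda f\in M^{\circ}$. So the only real work is closure under addition.

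For the additive step, take $f,g\in M^{\circ}$ and pick finite-codimensional ideals $I,J\subseteq G$ with $f(M\cdot I)=0$ and $g(M\cdot J)=0$. The candidate witness for $f+g$ will be $I\cap J$. I would verify two things: (a) $I\cap J$ is again a finite-codimensional ideal of $G$, and (b) both $f$ and $g$ vanish on $M\cdot(I\cap J)$. For (a), the argument from Theorem~\ref{c} transfers directly: $I\cap J$ is visibly stable under multiplication by $G$ on either side and under $\alpha,\beta$, and the finite-codimensionality follows from the chain of isomorphisms $J/(I\cap J)\cong (I+J)/I\subseteq G/I$, combined with $G/J\cong (G/(I\cap J))/(J/(I\cap J))$, so $\dim(G/(I\cap J))$ is bounded by $\dim(G/I)+\dim(G/J)$. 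For (b), note that $I\cap J\subseteq I$ implies $M\cdot(I\cap J)\subseteq M\cdot I$, so $f(M\cdot(I\cap J))=0$; symmetrically $g(M\cdot(I\cap J))=0$, and therefore $(f+g)(M\cdot(I\cap J))=0$.

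Thus $f+g\in M^{\circ}$, completing the proof that $M^{\circ}$ is a $\mathbb K$-subspace of $M^{*}$. There is no substantive obstacle here: the only point to be careful about is the set-theoretic inclusion $M\cdot(I\cap J)\subseteq (M\cdot I)\cap (M\cdot J)$, which is immediate from the definition of $M\cdot I$ as the span of simple tensors $m\cdot i$ (so we do not need the reverse, potentially false, inclusion). Everything else is a bookkeeping rerun of Theorem~\ref{c}.
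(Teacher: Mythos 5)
Your proposal is correct and follows essentially the same route as the paper: both reduce closure under addition to the witness $K=I\cap J$, using that $I\cap J$ is a finite-codimensional ideal (as already established in Theorem~\ref{c}) and that $M\cdot(I\cap J)\subseteq M\cdot I$ and $M\cdot(I\cap J)\subseteq M\cdot J$. Your extra remarks on the scalar case and the set-theoretic inclusion are just more explicit bookkeeping of the same argument.
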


\begin{proof}
Clearly $0\in M^{\circ}$ and $M^{\circ}$ is stable under scalar multiplication.
Let $f,g\in M^{\circ}$, and choose finite-codimensional ideals $I,J\subseteq G$ with
$f(M\cdot I)=0$ and $g(M\cdot J)=0$.
Set $K:=I\cap J$. Then $K$ is a finite-codimensional ideal of $G$ and $M\cdot K\subseteq M\cdot I$ and $M\cdot K\subseteq M\cdot J$.
Hence $(f+g)(M\cdot K)=0$, so $f+g\in M^{\circ}$.
\end{proof}

\begin{theorem}\label{f2}
Let $(G,\mu,\alpha,\beta)$ be a BiHom-algebra with $\beta$ surjective, and let
$(M,\rho,\kappa,\tau)$ be a right BiHom-module over $G$.
Define
\[
G^{\circ}\subseteq G^{*},\qquad M^{\circ}\subseteq M^{*}
\]
to be the Sweedler duals, and set
\[
\rho^{\circ}:=\rho^{*}\big|_{M^{\circ}},\qquad
\kappa^{\circ}:=\kappa^{*}\big|_{M^{\circ}},\qquad
\tau^{\circ}:=\tau^{*}\big|_{M^{\circ}},
\]
where $\rho^{*}:M^{*}\to (M\otimes G)^{*}$ is the linear dual of $\rho$.
Then $\rho^{\circ}$ takes values in $M^{\circ}\otimes G^{\circ}$, and
\[
(M^{\circ},\rho^{\circ},\tau^{\circ},\kappa^{\circ})
\]
is a right BiHom-comodule over the Sweedler dual BiHom-coalgebra
$(G^{\circ},\Delta,\beta^{\circ},\alpha^{\circ})$.
\end{theorem}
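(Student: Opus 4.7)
The plan is to follow the same blueprint as the proof of Theorem~\ref{f}, now in the module setting. Three things must be checked: (i) $\rho^{\circ}$ lands in $M^{\circ}\otimes G^{\circ}$, (ii) $\kappa^{\circ}$ and $\tau^{\circ}$ preserve $M^{\circ}$, and (iii) the three BiHom-comodule axioms, obtained by dualizing the BiHom-module axioms for $(M,\rho,\kappa,\tau)$.

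For (i), I would fix $f\in M^{\circ}$ and a finite-codimensional ideal $I\subseteq G$ with $f(M\cdot I)=0$. Then $\rho^{*}(f)(m\otimes j)=f(m\cdot j)=0$ for every $m\in M$ and $j\in I$, so $\rho^{*}(f)$ vanishes on $M\otimes I$. By Lemma~\ref{zz} applied with trivial subspace on the first factor, $\rho^{*}(f)$ descends to $M\otimes(G/I)$, and since $\dim(G/I)<\infty$, Lemma~\ref{iso} yields a finite decomposition $\rho^{*}(f)=\sum_{i=1}^{n}f_{i}\otimes \bar e_{i}^{*}$ with $\{\bar e_{i}^{*}\}$ a basis of $(G/I)^{*}$ and $f_{i}\in M^{*}$. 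Pulling back by the quotient $\pi\colon G\to G/I$, the elements $e_{i}^{*}:=\bar e_{i}^{*}\circ\pi$ annihilate $I$ and so lie in $G^{\circ}$. What remains is to show $f_{i}\in M^{\circ}$. Choosing preimages $g_{i}\in G$ of the dual basis in $G/I$, one reads off $f_{i}(m)=f(m\cdot g_{i})$. This is where the surjectivity hypothesis on $\beta$ enters: writing $g_{i}=\beta(h_{i})$ and applying BiHom-associativity~\eqref{zd1}, for any $j\in I$ and $m\in M$ one computes $(m\cdot j)\cdot g_{i}=(m\cdot j)\cdot\beta(h_{i})=\kappa(m)\cdot(jh_{i})$, which lies in $M\cdot I$ because $I$ is an ideal. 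Hence $f_{i}(M\cdot I)=0$, so $f_{i}\in M^{\circ}$. I expect this to be the main obstacle of the proof: the identity $\rho\circ(\rho\otimes\beta)=\rho\circ(\kappa\otimes\mu)$ can only be used to push $g_{i}$ across the action after realizing $g_{i}$ as $\beta(h_{i})$, and without surjectivity of $\beta$ there is no evident way to exhibit a finite-codimensional ideal annihilating $f_{i}$.

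Step (ii) is a direct check: by $\kappa$- and $\tau$-multiplicativity, $\kappa(m\cdot g)=\kappa(m)\cdot\alpha(g)$ and $\tau(m\cdot g)=\tau(m)\cdot\beta(g)$, and since $\alpha(I),\beta(I)\subseteq I$ (recall that ideals are stable under both twisting maps), the subspace $M\cdot I$ is stable under both $\kappa$ and $\tau$. Dualizing, $\kappa^{*}$ and $\tau^{*}$ preserve $M^{\circ}$, so $\kappa^{\circ}$ and $\tau^{\circ}$ are well defined as endomorphisms of $M^{\circ}$; the required commutation $\kappa^{\circ}\circ\tau^{\circ}=\tau^{\circ}\circ\kappa^{\circ}$ is obtained by transposing $\kappa\circ\tau=\tau\circ\kappa$.

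Finally, for (iii) I would verify each of the comodule identities by pairing against a pure tensor in $M\otimes G\otimes G$ (for BiHom-coassociativity) or $M\otimes G$ (for the comultiplicativity axioms) and unwinding the definitions via Sweedler's notation, exactly as in the proofs of Theorem~\ref{f} and Lemma~\ref{q}. The two sides of each identity collapse onto expressions of the form $f\bigl(\rho\circ(\rho\otimes\beta)(m\otimes g\otimes h)\bigr)$ versus $f\bigl(\rho\circ(\kappa\otimes\mu)(m\otimes g\otimes h)\bigr)$, and analogously for the multiplicativity identities, so equality is immediate from the corresponding BiHom-module axiom for $M$. Step (i) ensures that all intermediate summands already lie in the appropriate Sweedler duals, so no further care is required in infinite dimensions.
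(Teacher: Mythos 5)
Your proposal is correct, and it establishes the theorem by the same overall strategy as the paper: check that $\rho^{\circ}$ lands in $M^{\circ}\otimes G^{\circ}$ via a finite-dimensional quotient, check stability of $M^{\circ}$ under $\kappa^{*},\tau^{*}$, and then verify the comodule axioms by pairing against pure tensors exactly as in Lemma~\ref{q}. The only genuine difference is the organization of the first step: the paper shows that $\rho^{*}(f)$ annihilates the larger subspace $M\cdot J\otimes G+M\otimes J$ (this is where it invokes surjectivity of $\beta$, via $\rho(M\cdot J\otimes\beta(G))=\rho(\kappa(M)\otimes JG)\subseteq M\cdot J$ from~\eqref{zd1}), passes to the double quotient $(M/M\cdot J)\otimes(G/J)$, and pulls back both tensor factors at once; you quotient only the $G$-factor by $M\otimes I$, write $\rho^{*}(f)=\sum_i f_i\otimes e_i^{*}$ with a priori $f_i\in M^{*}$, and then use surjectivity of $\beta$ component-wise through the identity $(m\cdot j)\cdot\beta(h_i)=\kappa(m)\cdot(jh_i)\in M\cdot I$ together with the explicit formula $f_i(m)=f(m\cdot g_i)$ to conclude $f_i\in M^{\circ}$. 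The two arguments hinge on exactly the same computation and the same hypotheses; yours trades the kernel identification of Lemma~\ref{zz} in both factors for a concrete description of the left components, while the paper's version stays formally parallel to the proof of Theorem~\ref{f}. Your steps (ii) and (iii) coincide with the paper's treatment.
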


\begin{proof}
Noticing that $f\in M^{\circ}$, we aim to verify that $\rho^{\circ}(f)\in M^{\circ}\otimes G^{\circ}$. There exists a finite-codimensional ideal $J$ of $G$ such that $f(M\cdot J) = 0$. Since $\beta$ is the epimorphism map of $G$, $\kappa(M)\subseteq M$, $\tau(M)\subseteq M$ and $JG\subseteq J$, we have
    \begin{align*}
        \rho^{\circ}(f)(M\cdot J \otimes  G + M \otimes J) &=
         \rho^{\circ}(f)(M\cdot J \otimes \beta(G) + M \otimes J) \\
        &=f\circ\rho(M\cdot J \otimes \beta(G) + M \otimes J) \\
        &= f(\rho(M\cdot J \otimes \beta(G)) + \rho(M \otimes J)) \\
        &= f(\rho(\kappa(M) \otimes JG)+\rho(M\otimes J))\\
        &= 0.
    \end{align*}
    Denote $\pi : M \otimes G \rightarrow M \otimes G / (M\cdot J \otimes G + M \otimes J)$ and $\operatorname{Ker}(\pi) = M\cdot J \otimes G + M \otimes J \subseteq \operatorname{Ker}(f\circ \rho).$
    Using Lemma~\ref{zz}, we can induce a unique morphism $\overline{f\rho} : M \otimes G / (M\cdot J \otimes G + M \otimes J) \rightarrow \mathbb K$ satisfying $\overline{f\rho}\circ \pi=f\circ\rho$, that is to say,
    \begin{align*}
    \overline{f\rho} \in \left( M \otimes G / (M\cdot J \otimes G + M \otimes J) \right)^*.
    \end{align*}

    According to the Lemma~\ref{iso}, we can get $M \otimes G / (M\cdot J \otimes G + M \otimes J)\cong (M / M\cdot J) \otimes (G / J)$ with finite-dimensional linear space $G / J$, so we have
\begin{align*}
    \overline{f \rho} \in (M / M\cdot J)^* \otimes (G / J)^*.
\end{align*}

    Denote two canonical maps $\pi_1 : M \rightarrow M/M\cdot J$ and $\pi_2 : G \rightarrow G/J$. And then we denote $\overline{f \rho} = \sum^n_{i,j=1}k_{ij}\overline{d_i}^* \otimes \overline{e_j}^*$ where $\overline{d_i}^* \in (M/M\cdot J)^*$, $\overline{e_j}^* \in (G/J)^*$ are the dual base elements. After a similar discussion as in Theorem~\ref{f}, for any $x\in M$ and $y\in G$, we can get that
    $$
    \langle f\circ\rho, x\otimes y\rangle=\sum_{i,j=1}^n k_{ij}\langle\overline{d_i}^*,\pi_1(x)\rangle\langle\overline{e_j}^*,\pi_2(y)\rangle.
    $$
    Denote $d_i^*=\overline{d_i}^*\circ\pi_1$ and $e_j^*=\overline{e_j}^*\circ\pi_2$. Since $d_i^*(M\cdot J)=0$ and $\operatorname{Ker}(e_j^*)\supseteq J$, we have $d_i^*\in M^{\circ}$ and $e_j^*\in G^{\circ}$. Thus, we have
    $$
    \rho^{\circ}(f)=f\circ\rho=\sum_{i,j=1}^n k_{ij} d_i^*\otimes e_j^*.
    $$  Consequently, $\rho^{\circ}(f) \in M^{\circ} \otimes G^{\circ}$.\par
    For any $m\in M^{\circ}$, there exists a finite-codimensional ideal $J$ of $G$ such that $m(M\cdot J)=0$. Then we have
    \begin{align*}
        \langle \kappa^{\circ}(m), M\cdot J\rangle=\langle m ,\kappa(M\cdot J)\rangle=0,
    \end{align*}
    since $\kappa(M\cdot J)=\kappa(M)\cdot \alpha(J)\subseteq M\cdot J$. Then we have $\kappa^{\circ}(m)(M\cdot J)=0$, which implies $\kappa^{\circ}(m)\in M^{\circ}$. Thus, $\kappa^{\circ}(M^{\circ})\subseteq M^{\circ}$. Similarly, $\tau^{\circ}(M^{\circ})\subseteq M^{\circ}$.
    Combining Theorem~\ref{c}, Theorem~\ref{c'}, and Lemma~\ref{q} together, we deduce that $\rho^{\circ}\circ\kappa^{\circ}=(\kappa^{\circ}\otimes \alpha^{\circ})\circ\rho^{\circ}$, $\rho^{\circ}\circ\tau^{\circ}=(\tau^{\circ}\otimes \beta^{\circ})\circ\rho^{\circ}$ and $(\rho^{\circ}\otimes\beta^{\circ})\circ\rho^{\circ}=(\kappa^{\circ}\otimes\Delta)\circ\rho^{\circ}$. Hence, $M^{\circ}$ is a right BiHom-comodule of $G^{\circ}$.
\end{proof}

\begin{remark}
The surjectivity of the twisting map $\beta:G\to G$ is used essentially in the proof of Theorem~\ref{f2}.
Indeed, $\beta(G)=G$ implies
\[
M\cdot J\otimes G \;=\; M\cdot J\otimes \beta(G),
\]
and therefore, for $f\in M^{\circ}$ annihilating $M\cdot J$, we obtain
\[
\rho^{\circ}(f)\bigl(M\cdot J\otimes G + M\otimes J\bigr)=0.
\]
This vanishing property is essential. It ensures that $\rho^{\circ}(f)$ descends to the quotient tensor product 
\[
(M/M\cdot J)\otimes (G/J)
\]
which in turn implies that $M^{\circ}\otimes G^{\circ}$.
\end{remark}

Having constructed the induced right $G^{\circ}$-BiHom-comodule $M^{\circ}$ in Theorem~\ref{f2}, we show that the construction is compatible with morphisms, leading to the following result.

\begin{theorem}\label{f3}
Let $(G,\mu,\alpha,\beta)$ be a BiHom-algebra with $\beta$ surjective, and let
$(M,\rho,\kappa,\tau)$ and $(N,\rho',\kappa',\tau')$ be right BiHom-modules over $G$.
Assume that $\sigma:M\to N$ is a morphism of right BiHom-modules.
Then the restricted dual map
\[
\sigma^{\circ}:=\sigma^{*}\big|_{N^{\circ}}:N^{\circ}\longrightarrow M^{\circ}
\]
is a morphism of right BiHom-comodules over the Sweedler dual coalgebra $G^{\circ}$.
\end{theorem}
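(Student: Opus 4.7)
The plan is to split the argument into two parts: first verify that $\sigma^{\circ}$ sends $N^{\circ}$ into $M^{\circ}$ so that the stated map is well defined, and then check the three compatibility axioms of a morphism of right BiHom-comodules.

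For well-definedness, I would take $n^{*}\in N^{\circ}$ and pick a finite-codimensional ideal $J\subseteq G$ with $n^{*}(N\cdot J)=0$. Since $\sigma$ is a morphism of right BiHom-modules, the first relation in~\eqref{s7} gives $\sigma(m\cdot g)=\sigma(m)\cdot g$ for all $m\in M,\,g\in G$, hence $\sigma(M\cdot J)\subseteq N\cdot J$. Therefore $\langle \sigma^{\circ}(n^{*}),M\cdot J\rangle=\langle n^{*},\sigma(M\cdot J)\rangle\subseteq \langle n^{*},N\cdot J\rangle=0$, which shows $\sigma^{\circ}(n^{*})\in M^{\circ}$ with $J$ as witness ideal.

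For the compatibility with the coactions, I would compute the dual pairings directly. For $n^{*}\in N^{\circ}$, $m\in M$, $g\in G$, the left-hand side evaluates as $\langle \rho^{\circ}\circ\sigma^{\circ}(n^{*}),m\otimes g\rangle = \langle n^{*},\sigma\circ\rho(m\otimes g)\rangle = \langle n^{*},\rho'(\sigma(m)\otimes g)\rangle$, using $\sigma\circ\rho=\rho'\circ(\sigma\otimes\mathrm{Id}_{G})$ from~\eqref{s7}. On the other side, writing $\rho'^{\circ}(n^{*})=\sum_{(n^{*})} n^{*}_{(0)}\otimes n^{*}_{(1)}$ in Sweedler notation, the right-hand side expands as $\langle (\sigma^{\circ}\otimes\mathrm{Id}_{G^{\circ}})\circ\rho'^{\circ}(n^{*}),m\otimes g\rangle = \sum_{(n^{*})}\langle n^{*}_{(0)},\sigma(m)\rangle\langle n^{*}_{(1)},g\rangle = \langle n^{*},\rho'(\sigma(m)\otimes g)\rangle$, and the two agree. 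Compatibility with the twisting maps then reduces to single-variable pairings: $\langle \tau^{\circ}\circ\sigma^{\circ}(n^{*}),m\rangle = \langle n^{*},\sigma\circ\tau(m)\rangle = \langle n^{*},\tau'\circ\sigma(m)\rangle = \langle \sigma^{\circ}\circ\tau'^{\circ}(n^{*}),m\rangle$, using $\sigma\circ\tau=\tau'\circ\sigma$, and analogously for $\kappa$ using $\sigma\circ\kappa=\kappa'\circ\sigma$.

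The main obstacle is genuinely mild: the only substantive step is establishing $\sigma^{\circ}(N^{\circ})\subseteq M^{\circ}$, which hinges on the $G$-equivariance $\sigma(m\cdot g)=\sigma(m)\cdot g$ of a right BiHom-module morphism, not merely on the twisting compatibilities. The remaining verifications are routine duality calculations, parallel to those in Corollary~\ref{f1}. Notably, the surjectivity hypothesis on $\beta$ inherited from Theorem~\ref{f2} is not used explicitly in this argument; it has already been absorbed into the construction of the $G^{\circ}$-comodule structures on $M^{\circ}$ and $N^{\circ}$ themselves, and plays no further role in checking that $\sigma^{\circ}$ intertwines them.
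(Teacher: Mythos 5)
Your proposal is correct and follows essentially the same route as the paper's own proof: well-definedness via $\sigma(M\cdot J)\subseteq N\cdot J$ (using $\sigma\circ\rho=\rho'\circ(\sigma\otimes\mathrm{Id}_G)$) with the same witness ideal $J$, followed by the standard pairing computations for the coaction and the twisting maps. Your closing observation that the surjectivity of $\beta$ enters only through the construction of the comodule structures in Theorem~\ref{f2}, and not in the intertwining check itself, is also consistent with the paper's argument.
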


\begin{proof}
    For all $f \in N^{\circ}$, let $J$ be a finite-codimensional ideal of $G$ such that $f(N\cdot J) = 0$. By the properties of the morphism of the right Hom-modules, we have
    \begin{align*}
    \sigma^{\circ}(f)(M\cdot J) &= \sigma^{\circ}(f)\circ\rho(M \otimes J) \\
    &= f(\sigma\circ \rho(M \otimes J)) \\
    &= f(\rho' \circ (\sigma \otimes \operatorname{Id}_G)(M \otimes J)) \\
    &= f(\sigma(M)\cdot J)\\
    &= 0,
    \end{align*}
    since $\sigma(M)\subseteq N$. Hence $\sigma^{\circ}(N^{\circ}) \subseteq M^{\circ}$.
    
    %Then we will prove that $(\sigma^{\circ}\otimes\alpha^{\circ})\circ\psi'^{\circ}=\psi^{\circ}\circ\sigma^{\circ}$. 
    For any $b^*\in N^{\circ}$ and $x\in M, y\in G$, we have
    \begin{align*}
        \langle(\sigma^{\circ}\otimes\operatorname{Id}_{G^{\circ}})\circ\rho'^{\circ}(b^*), x\otimes y\rangle&=\sum_{(b^*)}\langle(\sigma^{\circ}\otimes\operatorname{Id}_{G^{\circ}})(b^*_{(0)}\otimes b^*_{(1)}), x\otimes y\rangle\\
        &=\sum_{(b^*)}\langle \sigma^{\circ}( b^*_{(0)}), x\rangle\langle b^*_{(1)}, y\rangle\\
        &=\langle \rho'^{\circ}(b^*),  (\sigma\otimes\operatorname{Id}_G)(x\otimes y)\rangle\\
        &=\langle b^*, \rho'\circ(\sigma\otimes\operatorname{Id}_G)(x\otimes y)\rangle\\
        &=\langle b^*, \sigma\circ\rho(x\otimes y)\rangle\\
        &=\langle\rho^{\circ}\circ\sigma^{\circ}(b^*), x\otimes y\rangle.
    \end{align*}
    Thus we obtain $(\sigma^{\circ}\otimes\operatorname{Id}_{G^{\circ}})\circ\rho'^{\circ}=\rho^{\circ}\circ\sigma^{\circ}$.
    %Then we vertify $\gamma^{\circ}\circ\sigma^{\circ}=\sigma^{\circ}\circ\gamma'^{\circ}$. 
    And then for all $c^*\in N^{\circ}$ and $x\in M$,
    \begin{align*}
        \langle \kappa^{\circ}\circ\sigma^{\circ}(c^*), x\rangle
        =\langle c^*, \sigma\circ\kappa(x)\rangle
        =\langle c^*, \kappa'\circ\sigma(x)\rangle
        =\langle \sigma^{\circ}\circ\kappa'^{\circ}(c^*), x\rangle.
    \end{align*}
    Thus it follows that  $\kappa^{\circ}\circ\sigma^{\circ}=\sigma^{\circ}\circ\kappa'^{\circ}$. Similarly, $\tau^{\circ}\circ\sigma^{\circ}=\sigma^{\circ}\circ\tau'^{\circ}$. %Therefore this theorem is proved.
\end{proof}

\bibliographystyle{plain}

\end{document}